\newtheorem{thm}{Theorem}[section]
\newtheorem{cor}[thm]{Corollary}
\newtheorem{lemma}[thm]{Lemma}
\numberwithin{equation}{section}
\theoremstyle{definition}
\newtheorem{rem}[thm]{Remark}
\newtheorem{example}[thm]{Example}
\newtheorem{definition}[thm]{Definition}
\newcommand{\bD}{{\mathbb{D}}}
\newcommand{\bN}{{\mathbb{N}}}
\newcommand{\bR}{{\mathbb{R}}}
\newcommand{\bS}{{\mathbb{S}}}
\newcommand{\bP}{{\mathbb{P}}}
  \newcommand{\B}{{\mathcal{B}}}
  \newcommand{\D}{{\mathcal{D}}}
\renewcommand{\S}{{\mathcal{S}}}
  \newcommand{\U}{{\mathcal{U}}}
\begin{document}

\setcounter{tocdepth}{1}

\title[Noncompact surfaces, triangulations and rigidity]
{Noncompact surfaces, triangulations and rigidity}

\author[Stephen C. Power]{Stephen C. Power}


\address{Dept.\ Math.\ Stats.\\ Lancaster University\\
Lancaster LA1 4YF \\U.K. }

\email{s.power@lancaster.ac.uk}

\thanks{
{MSC2020 {\it  Mathematics Subject Classification.}
52C25, 05C10 \\
{  \today} 
}}

\maketitle

\begin{abstract}
Every noncompact surface is shown to have a (3,6)-tight triangulation, and applications are given to the generic rigidity of countable bar-joint frameworks in $\bR^3$. In particular, every noncompact surface has a (3,6)-tight triangulation that is minimally 3-rigid. 
A simplification of Richards' proof of Ker\'ekj\'art\'o's  classification of noncompact surfaces is also given.

\end{abstract}

\section{Introduction} 
Perhaps the first example of a noncompact surface that springs to mind is a sphere, torus or Klein bottle, with several points or closed discs removed.
Further examples are given by unbounded surfaces, such as an infinite cylinder with infinitely many handles, and by more exotic variations  obtained by deleting a closed totally disconnected subset $C$. 
We show that every noncompact surface has a triangulation whose underlying graph is a countable union of finite $(3,6)$-tight graphs, in the sense of Definition \ref{d:36tight}. This is first done in a constructive way, with moves that preserve minimal 3-rigidity as well as $(3,6)$-tightness.
A second nonconstructive proof follows from a characterisation of general $(3,6)$-tight triangulations of compact surfaces in terms of length constraints on the boundary walks of superfaces of a given genus.  

For compact surfaces the Euler formulae imply that only the sphere has a $(3,6)$-tight triangulation; the $(3,6)$-tight cellular embedded graphs $G\subset S$ of other compact surfaces necessarily have a number of nontriangular faces. If the boundary walks of these faces happen to be disjoint cycles of $G$ then $G$ may be regarded as a $(3,6)$-tight triangulation of the compact bordered surface formed by the excision of these faces. 

A motivation for determining $(3,6)$-tight triangulations of surfaces comes from their relevance to the rigidity or otherwise of triangulated bar-joint frameworks in $\bR^3$. A finite connected bar-joint framework $(G,p)$ in $\bR^3$, with the joints $p(v)$ located generically, is \emph{minimally rigid} 
 if it is rigid (infinitesimally or continuously) and if removing any edge results in a framework that is flexible. If this is the case we say, without ambiguity, that the graph is \emph{minimally $3$-rigid}. A necessary  condition for this, although not a sufficient one, is that $G$ is a $(3,6)$-tight graph.
The origins of such graph rigidity can be traced back to Cauchy's proof in 1813 that a convex polyhedron in $\bR^3$ is a continuously rigid plate-and-hinge structure \cite{cau}, and to Maxwell's 1864 observation that the graph of a rigid bar-joint framework satisfies certain counting rules \cite{max}. 
 
The rigidity definitions apply also to countable graphs and their infinite bar-joint frameworks in $\bR^3$ \cite{owe-pow} and to rigidity with respect to nonEuclidean norms \cite{kit-pow}, \cite{dew-kit-nix}. As a main application we show in Section \ref{s:barjointframeworks} that every noncompact surface has triangulations whose graphs are minimally 3-rigid. For $G\subset S\backslash C$, with $S$ the sphere or the projective plane and $C$ closed and totally disconnected, $(3,6)$-tightness is in fact equivalent to minimal 3-rigidity but these low genus examples are exceptions and we indicate some related open problems in Section \ref{ss:openprobs}.
 
In the construction of triangulations we first make use of certain model noncompact surfaces $S_\gamma$  and the fact that every noncompact surface (without boundary) is homeomorphic to an $S_\gamma$. Such a model surface is a topological connected sum of low genus compact surfaces where the structure graph of the connections is a countable tree. These low genus surfaces are the sphere $\bS_0$, the torus  $\bS_1$, and the projective plane $\bP$.
The model surfaces may also be described as the join of compact bordered surfaces 
of the form $S\backslash r\bD$, where $r=1,2$ or $3$, and $S$ is equal to $\bS_0, \bS_1$ or $\bP$.
Here, $r\bD$ denotes the union of the interiors of $r$ disjoint embedded closed discs.


The development is organised as follows. In Section \ref{s:modelsurfaces} we define noncompact surfaces and  model surfaces and give a sketch proof of Theorem \ref{t:modelsufficiency}, showing that every noncompact surface $S$ is homeomorphic to a model surface. The proof requires the construction of a certain inclusion chain of compact bordered subsurfaces of $S$ and corresponds to the 
\emph{canonical exhaustion} construction in Section 29 of Ahlfors and Sario \cite{ahl-sar}.
In Section \ref{s:invariants} we define homeomorphism invariants, namely the totally disconnected Hausdorff space $\beta(S)$, known as the \emph{ideal boundary} of $S$, and two of its subsets.
These invariants are the ingredients of Ker\'ekj\'art\'o's  1923 classification \cite{ker} of noncompact surfaces, Theorem \ref{t:Kthm}, the proof of which has been given in Richards \cite{ric}.  We give a proof broadly similar to this although more explicit and economical through the use of Theorem \ref{t:modelsufficiency} and model surfaces. We note that Goldman \cite{gol} has given an algebraic proof involving the calculation of singular homology groups.

In Section \ref{s:triangulations of noncompact surfaces} we show that every noncompact surface has a $(3,6)$-tight triangulation by means of an explicit construction sequence for each model surface. This involves vertex-splitting moves, Henneberg 0-extension moves and joins with appropriately sparse triangulations of low genus compact bordered surfaces. 
In Section \ref{s:characterisation} an alternative construction  of such triangulations is given in terms of barycentric subdivision and the satisfaction of length constraints on the boundary walks of superfaces of a given genus.
In the final section we give applications to the generic rigidity of infinitely triangulated bar-joint frameworks. 

For a broad background on compact surfaces, embedded graphs and triangulations see Gross and Tucker \cite{gro-tuc} and Mohar and Thomassen \cite{moh-tho}. Graver et al \cite{gra-ser-ser} give the generic rigidity theory of finite bar-joint frameworks. Generic rigidity for infinite frameworks is considered, for example, in \cite{kit-pow-I}, \cite{kit-pow-II} and \cite{kas-kit-pow}.
 
\section{The model surfaces $S_\gamma$}\label{s:modelsurfaces}

A \emph{surface} is a pathwise connected metrizable Hausdorff space for which every point $x$ has a neighbourhood that is homeomorphic to the open disc $\bD$ with the image of $x$ an interior point.
Examples of compact surfaces are obtained by gluing in a pairwise manner the sides of several polygons whose total side count is even. In this case the (paired) edges provide an embedded graph in the compact surface, the faces of  which are \emph{cellular}, that is, homeomorphic to open discs. Since the polygons may be partitioned into triangles such a surface is said to be triangulable, or to be a triangulated surface. In fact every compact surface is homeomorphic to a triangulated surface \cite{tho} and it  follows readily from this that every noncompact surface admits a triangulation. See \cite{tho} and \cite{moh-tho} for example.
 
A \emph{subsurface} of a surface $S$ (either compact or noncompact) is a pathwise connected set in $S$ that is closed in $S$ with boundary consisting  of a \emph{finite} number of disjoint simple closed curves. We usually refer to a subsurface of $S$ as a \emph{bordered surface} or \emph{bordered subsurface}. The \emph{genus} $g=g(S)$ of $S$ is defined to be either the maximum of the genus of a compact bordered subsurface of $S$ or to be infinity if there is no finite maximum. We recall that the reduced genus $g_r(A)$ of a compact bordered surface $A$ is equal to $g(A)$ if $A$ is orientable and to $g(A)/2$ otherwise.  
 
The classification theorem for compact surfaces asserts that each such  surface is homeomorphic to either a sphere with $g\geq 0$ added handles, or to a sphere with $g\geq 1$ added crosscaps \cite{gro-tuc}, \cite{moh-tho}.
We now consider a family of model noncompact surfaces that are defined in terms of possibly countable additions of handles and crosscaps in a structured manner.

\subsection{The surfaces $S_\gamma$} 
Let $\bS_0, \bS_1$ be the sphere and the torus respectively, let $\bP$ be the real projective plane, and denote the connected sum surface of a pair $S_a, S_b$ of these spaces as $S_a + S_b$. This is obtained by excising the interior of embedded closed discs $D_a\subset S_a, D_b\subset S_b$ and gluing the results together at their boundary curves. 
Multiple connected sums, such as $S_a+S_b+S_c+S_d= ((S_a+S_b)+S_c)+S_d$, may be defined in this way where, prior to gluing, the first and last component bordered surfaces have a single boundary curve, and the intermediate bordered surfaces have boundaries consisting of a pair of disjoint simple closed curves.

The infinite connected sum $S=\sum_{k=1}^\infty S_k$, where  $S_k\in \{\bS_0, \bS_1, \bP\}$ for each $k$, may now be defined as the noncompact surface given as the union of the compact bordered surfaces
\[
S^n= (S_1+\dots +S_{n-1}) +(S_n\backslash D_{n}^0), \quad n=2,3,\dots ,
\]
where $D_n^0$ is the interior of an embedded closed disc $D_n \subset S_n$ and the boundary $\partial S^n$ of $S^n$ is a single closed curve equal to the boundary $\partial D_n$. In the case that $S_k=\bS_0$ for all $k$ this infinite connected sum, $\sum_\bN \bS_0$, is homeomorphic to the plane.

In the construction of $S$ we have   \emph{component bordered surfaces}, prior to any gluing, namely  $S_1\backslash \bD$ and $S_k\backslash 2\bD$, for $k\geq 2$. 
We now define the more general infinite connected sums $S_\gamma$, where a component bordered surface can also be the \emph{branching surface} $\bS\backslash 3\bD$  (see Figure \ref{f:pants2legs}). In this case the structure of the connections is determined by a countable tree that is a  subtree of the binary tree $T_{\rm bin}$.

Let $T_{\rm bin}=(V_{\rm bin},E_{\rm bin})$ be the binary tree associated with the usual construction of the middle thirds Cantor set $C$ in $[0,1]$. A natural labelling of the vertices is by the symbols 
\[
s= \phi, 0,2,00,02,20,22,000, \dots , 222, 0000\dots 
\]
where the digit symbols $s$ of length $|s|=k$ label the closed intervals $I_s$ of length $3^{-k}$ in the construction. The vertex $v_s$ of the tree at level $k$ is adjacent to $v_{s0}$ and $v_{s2}$ at level $k+1$ and we may view the edges $v_sv_{s0}$ and $v_sv_{s2}$ as directed edges.
 Since $C$ consists of the points $x$ in $[0,1]$ with a ternary expansion
$0.s_1s_2\dots $, with $s_k=0$ or $2$, there is a bijection between $C$ and the infinite directed paths with source vertex $v_\phi$ associated with such ternary expansions.

Suppose that $T=(V,E)$ is a countable subtree  of $T_{\rm bin}$ with  $v_\phi\in V$ and that $\gamma$ is a map $\gamma:V \to \{\bS_0, \bS_1, \bP\}$ with $\gamma(v)=\bS_0$ if $v=v_\phi$ or if $\deg(v)=3$.
The noncompact surface $S_\gamma$ is the associated infinite connected sum, denoted 
$
\sum_{v\in V}\gamma(v),
$ 
whose component surfaces are $\bS_0\backslash \bD$ or $\bS_0\backslash 2\bD$ if $v=v_\phi$,
$\gamma(v)\backslash 3\bD=\bS_0\backslash 3\bD$ if $\deg(v)=3$, and $\gamma(v)\backslash 2\bD$  
if $\deg(v)=2.$

Once again, for $n\geq 0$, the surface $S_\gamma$ has a compact bordered subsurface $S_\gamma^n$ associated with the finite subtree $T_n$ of $T$ induced by the set of vertices $v_s$ of $T$ with $|s|\leq n$, and $S_\gamma$ is the union of this inclusion chain of bordered surfaces. Note also that the complement of $S_\gamma\backslash S_\gamma^n$ has closure equal to a union of disjoint bordered surfaces, each with a single cycle as boundary. 

The subset $C_T\subset C$ is defined to be the subset  corresponding to the infinite directed paths $\pi$ of $T$ that are also paths of $T_{\rm bin}$. This is a compact totally disconnected Hausdorff space in the relative topology. Also it is routine to show that it is homeomorphic to the \emph{ideal boundary} of $S_\gamma$, as defined in Section \ref{s:invariants}.

\begin{example}\label{e:Sgammaexamples}
(i) Let $T$, as a directed tree with source $v_\phi$, have no terminal vertices and let $\gamma(v)=\bS_0$ for all vertices $v$. If there are only finitely many infinite directed paths $\pi$, with source $v_\phi$,  then $C_T$ is a finite set and $S_\gamma$ is homeomorphic to the sphere with finitely many points deleted. In this case triangulations $G\subset S$ and the generic rigidity and flexibility of their bar-joint frameworks were considered in Kitson and Power \cite{kit-pow-II}. 

(ii) If $T=T_{\rm bin}$ and $\gamma(v)=\bS_0$ for every vertex then
$S_\gamma$ is homeomorphic to $\bS_0\backslash X$ where $X$ is homeomorphic to a Cantor set. This can be seen through a direct argument or as a simple case of the proof scheme in Theorem \ref{t:modelsufficiency}.

(iii) Let $T\subset T_{\rm bin}$ have the structure indicated in Figure \ref{f:treeExampleThird_C}. 
Then $C_T$ is equal to $\{z_1, z_2, \dots \}\cup \{z_\infty\}$ where $z_\infty$ is the single accumulation point and corresponds to the rightmost infinite path $\pi$. 
 \begin{center}
\begin{figure}[ht]
\centering
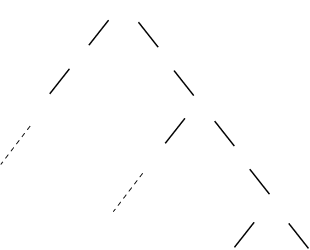\quad \quad \quad 
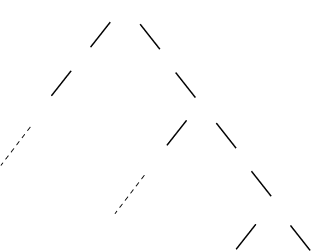
\caption{(i) A subtree $T$ of $T_{\rm bin}$ with vertices $v_s$ with digit symbol $s$. 
(ii) An associated connected sum surface $S_\gamma$.} 
\label{f:treeExampleThird_C}
\end{figure}
\end{center}

Consider the map $\gamma$ on the vertices of $T$ with, 
(a) $\gamma(v_\phi)=\bS_0$,
(b) $\gamma(v_s)=\bS_0$ when $\deg(v_s)=3$, 
(c) $\gamma(v_s)=\bS_1$ when $|s|$ is odd and $s=22\dots 2$, (d) $\gamma(v_s)=\bS_0$ for all other vertices. The infinite connected sum surface
$S_\gamma$ is orientable and has infinite genus. It can be shown, using the methods below, that it is homeomorphic to the unit sphere modified by, (i) the deletion of a convergent sequence of distinct points together with its limit,
(ii) the addition of a sequence of diminishing handles that cluster at $z_*$. 
\end{example}



We also note the following.
\medskip

(A) A model surface $S_\gamma$ for a pair $(T,\gamma)$ may be viewed in various ways as the union of  compact bordered surfaces with disjoint interiors.
For example, let $T(v,n,m)$, for $m>n$, be the finite subtree of the tree $T$ with source vertex $v=v_s$ at level $n=|s|$, together with all the  vertices on the paths from $v$ up to level $m$. Then the restriction of a given map $\gamma$ for $T$ to these vertices determines a compact \emph{bordered} surface $S(v,n,m)$ whose boundary is the union of a single ``entrance cycle" and a finite number of disjoint ``exit cycles" according to the degrees of the terminal vertices of $T(v,n,m)$.
Evidently $S_\gamma$ is equal to the union of such bordered surfaces coming from a partition of $T$ into subtrees of the type $T(v,n,m)$. We note that the reduced genus of $S(v,n,m)$  is the sum of the values $g_r(\gamma(w))$ over the vertices of $T(v,n,m)$, where $g_r(\bS_0)=0, g_r(\bP)=1/2, g_r(\bS_1)=1$.  
\medskip

(B) The classification of compact bordered surfaces $S$ ensures that if $S$ has $r+1$ boundary curves, with a distinguished entrance curve and $r$ exit curves, then $S$ is homeomorphic to a surface of the form $S(v,n,m)$ where the homeomorphism matches the distinguished entrance curves. 
\medskip

The model surfaces $S_\gamma$ are connected sums over countable trees that have vertex degrees bounded by 3. However, since concatenations of the branching surface $\bS\backslash 3\bD$ yield general branching surfaces of the form $\bS\backslash r\bD$ it is clear that the surfaces $S_\gamma$ include, up to homeomorphism, similar infinite connected sums over a general countable directed tree. 


\begin{thm}\label{t:modelsufficiency}
Each noncompact surface $S$ is homeomorphic to a surface $S_\gamma$.
\end{thm}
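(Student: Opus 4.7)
The plan is to inductively identify an exhaustion of $S$ by compact bordered subsurfaces with the compact bordered pieces $S_\gamma^n$ of a model surface that is constructed in parallel. The main ingredients are the triangulability of $S$ and fact (B) above: any compact bordered surface with a distinguished entrance cycle and $r$ exit cycles is homeomorphic to some $S(v,n,m)$.

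First I exhaust $S$ by a nested sequence $K_0\subset K_1\subset K_2\subset\cdots$ of compact bordered subsurfaces with $K_n\subset\operatorname{int}(K_{n+1})$, $\bigcup_n K_n=S$, and no component of $\overline{S\setminus K_n}$ compact. Such an exhaustion exists since $S$ is $\sigma$-compact via Thomassen's triangulation theorem. I then refine the exhaustion so that each component of $\overline{S\setminus K_n}$ meets $\partial K_n$ in a single simple closed curve; this is arranged by enlarging $K_n$ along collar neighbourhoods of finitely many disjoint arcs joining the boundary cycles of a multi-boundaried end into a tree in that end. After this refinement, every component $A$ of $\overline{K_{n+1}\setminus K_n}$ has exactly one inner boundary cycle on $\partial K_n$ and a finite number of outer boundary cycles on $\partial K_{n+1}$.

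Next, I build the subtree $T\subset T_{\rm bin}$, the labelling $\gamma$, and compatible homeomorphisms $h_n\colon K_n\to S_\gamma^n$ by induction on $n$. For the base case I take $K_0$ to be a closed disc and set $\gamma(v_\phi)=\bS_0$. At the inductive step, each annular component $A$ is attached to $K_n$ along an inner cycle which, via $h_n$, is matched to an exit cycle of $S_\gamma^n$ at some terminal vertex $v$ of $T_n$. By (B), $A$ is homeomorphic to $S(v,n,m_A)$ for some finite extension of $T_n$ below $v$; the tree extension and vertex labels are chosen so that the reduced genus decomposes correctly (using $g_r(\bS_0)=0$, $g_r(\bP)=1/2$, $g_r(\bS_1)=1$) and so that the number of terminal vertices of the extension equals the number of outer cycles of $A$. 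When this outer count exceeds two, iterated degree-three branchings within $T_{\rm bin}$ using $\gamma=\bS_0$ at auxiliary vertices simulate higher valency, as noted in the excerpt via concatenations of $\bS_0\setminus 2\bD$. Each local homeomorphism can be chosen to restrict to the prescribed boundary identification on the inner cycle, so the pieces glue into an extension $h_{n+1}$ of $h_n$. Passing to the direct limit yields $T=\bigcup_n T_n$, $\gamma=\bigcup_n\gamma_n$, and the desired homeomorphism $h=\bigcup_n h_n\colon S\to S_\gamma$.

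The main obstacle will be the one-inner-cycle refinement of the exhaustion: one must argue carefully that the arc-attachment procedure terminates at each stage and does not generate new compact components in $\overline{S\setminus K_n}$. A secondary point is ensuring that the boundary-respecting homeomorphisms on annular pieces can be simultaneously chosen so that the extensions $h_n\to h_{n+1}$ are consistent; this relies on the fact that the mapping class group of a compact bordered surface acts transitively on boundary parametrisations compatible with fixed topological invariants, so any matching of cycles can be realised by an ambient homeomorphism.
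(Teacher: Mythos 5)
Your proposal follows essentially the same approach as the paper's proof: exhaust $S$ by compact bordered subsurfaces, refine so that each complementary component meets the current subsurface in a single boundary cycle, identify each successive layer with a model piece $S(v,n,m)$ via fact (B), and glue. The only cosmetic difference is that you construct the homeomorphisms $h_n$ in tandem with the tree and labelling, whereas the paper first builds the refined exhaustion and tree and then concatenates the layer homeomorphisms at the end.
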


\begin{proof}[Sketch proof] As previously indicated we may assume that $S$ a triangulation $G\subset S$. Since $G$ is a countable union it follows that $S$ is the union of an inclusion chain of connected compact subsets $A_1\subset A_2 \subset \dots ,$ where we assume that each $A_n$ is the closure of the union of a finite set of faces of $G$, that the interior of $A_n$  is connected, and that $A_n$ is contained in the interior of $A_{n+1}$ for all $n$. However, $A_1$ need not be a compact bordered subsurface since a vertex $v$ of $G$ in the boundary of $A_1$ need not have a relative neighbourhood that is homeomorphic to a disc with the image of $v$ a boundary point. 
To remedy this an enlargement $A_1'$ of $A_1$ is constructed in a sufficiently small neighbourhood of the boundary of $A_1$. This may be done explicitly by performing barycentric subdivisions on the faces of $G$ that have closures meeting the boundary of $A_1$, and then defining $A_1'$, and redefining $G$ accordingly, by incorporating new faces in the small neighbourhood. 

The next step is to enlarge $A_1'$ to a similar compact bordered surface $A_1''$ with the additional property that each component of the closure of $S\backslash A_1''$, has boundary consisting of a \emph{single} simple closed curve of $A_1''$ corresponding to a cycle in a further local refinement of $G$. Suppose, for example, that $U$ is a component with just 2 boundary cycles, $c_1, c_2$ of the triangulation of $A_1'$. Then there are faces $F_1, F_2, \dots , F_r$ in the interior of $U$ forming a path between edges $e_1, e_2$, in $c_1, c_2$ respectively, in the sense that  $e_1$ is an edge of $F_1$, $e_2$ is an edge of $F_r$, and each pair $F_i, F_{i+1}$ share a single boundary edge. Augmenting the faces of $A_1'$ by these faces determines a new compact bordered surface with a closed walk replacing $c_1, c_2$. As before, a local barycentric subdivision can be performed to ensure that the closed walk is a cycle of edges.  Repeating this construction gives the desired compact bordered surface $A_1''$, with the components of the complement being in bijective correspondence with the boundary cycles of $A_1''$.

Repeating such constructions yields an inclusion chain of compact bordered surfaces $B_1\subset B_2 \subset \dots$, together with an associated triangulation such that for each $n$  the components of $S\backslash B_n$ have closures that are subsurfaces with a single boundary cycle. Such a chain of bordered surfaces is known as a canonical exhaustion \cite{ahl-sar}.

The inclusion chain determines a general countable directed tree $T=(V,E)$ where, (i) the vertices at level $n$ are labelled by the boundary cycles of $B_n$, (ii) a single source vertex is adjacent to vertices at level 1, (iii) a vertex $v$ at level $n$ is either a terminal vertex or is adjacent to vertices $v'$ at level $n+1$ where the boundary cycle for each $v'$ is contained in the component of the complement of $B_n$ with boundary cycle labelled by $v$. We see then that $S$ is the infinite join  of $B_1$ and the compact bordered subsurfaces that form the connected components of $(B_{n+1}\backslash B_n)^-$, for $n=1,2,\dots$. Such a component bordered subsurface, $\S$ say, has a distinguished entrance boundary cycle and a number, say $r_n\geq 0$, of exit boundary cycles, and at least one of these components has $r_n>0$. Noting (A) and (B) above, each component bordered surface $\S$ is homeomorphic to a topological connected sum of the form
\[
(S_1\backslash \bD)+S_2+S_3+\dots + S_{t-1} + (S_t\backslash r_n\bD), 
\] 
where each $S_i$ is one of $\bS_0, \bS_1$ or $\bP$. Combining these homeomorphisms, by concatenations, gives a homeomorphism between $S$ and a surface associated with an infinite connected sum over a general countable tree and hence, in view of the remark preceding the theorem, to a homeomorphism between $S$ and a model surface $S_\gamma$.
\end{proof}


\begin{cor}\label{c:noterminals}
Each noncompact surface $S$ is homeomorphic to a surface $S_\gamma$ associated with a countable tree $T$ with no terminal vertices.
\end{cor}

\begin{proof}
It suffices to show that any model surface is homeomorphic to a model surface with no terminal vertices. This follows routinely using the principle (B) and joins of homeomorphisms of compact bordered subsurfaces.
\end{proof}

\begin{rem} Theorem \ref{t:modelsufficiency} can also be deduced from Ker\'ekj\'art\'o's theorem by confirming that the model surfaces exhaust all possibilities for the invariants.
This simple confirmation is Theorem \ref{t:richards} below. A quite different set of model surfaces was given in Richards \cite{ric} to obtain this fact. 
\end{rem}

\section{Invariants and Ker\'ekj\'art\'o's theorem} \label{s:invariants}
A noncompact surface $S$ is said to be \emph{orientable} if every compact bordered subsurface is orientable and to be \emph{infinitely nonorientable}  if there is no compact bordered subsurface $A$ for which every component of $S \backslash A$ is orientable.

If $S$ fails to be orientable or infinitely nonorientable, in which case $S$ is said to be \emph{finitely nonorientable}, then it is said to be of  \emph{odd} or \emph{even nonorientability type} according to whether every sufficiently large compact subsurface contains an odd or even number of cross caps. This may also be expressed in terms of the reduced genus, where even (resp. odd) nonorientability type holds if every sufficiently large compact bordered subsurface $A$ is nonorientable and has integral (resp. nonintegral) reduced genus. In our considerations in subsequent sections we do not need to know that the odd/even nonorientability types are distinct properties up to homeomorphism but this is the case.  Indeed, consider the finitely nonorientable surfaces
\begin{equation} 
\label{e:infSums}
S_1=\bP+\sum_\bN \bS_1,\quad  S_2=\bP+\bP+\sum_\bN \bS_1,\\
 \quad S_3=\bP+\bP+\bP+\sum_\bN \bS_1.
\end{equation}
That $S_1$ and $S_3$ are homeomorphic follows readily from the fact that $\bP +\bS_1$ and $\bP +\bP+\bP$ are homeomorphic. On the other hand all sufficiently large  compact bordered subsurfaces $A$ of $S_2$ for which the complement $S_2\backslash A$ has orientable components have the property that $g_r(A)$ is an integer, and so $S_1$ and $S_2$ are not homeomorphic.

The \emph{ideal boundary} of $S$ is defined as a set of equivalence classes of so-called \emph{boundary components} of $S$. Such a component is a decreasing sequence $p=(P_n)$ of pathwise connected noncompact sets,  $p= P_1\supset P_2\supset \dots $, such that, (i) the boundary of $P_i$ in $S$ is a set of disjoint cycles, (ii) for each compact subset $A$ the intersection $A \cap P_n$ is empty for sufficiently large $n$.
The boundary components $p=(P_n)$ and $p'=(P_n')$ are said to be \emph{equivalent} if for any $n$ the inclusion $P_n\subseteq P_N'$ holds for some $N$, and for any $m$ , $P_m'\subseteq P_M$ for some $M$. The {ideal boundary} $\beta(S)$ is then the set of these equivalence classes.
A class $p^*$ is also known as an \emph{end} of the surface. 

The ideal boundary is topologised in the following manner. For any set $U$ in $S$ whose boundary in $S$ is compact the set $U^*$ is defined as the set of all boundary components $p^*$, represented
by a sequence $p = P_1 \supseteq P_2 \supseteq \dots $, such that $P_n\subset U$  for $n$ sufficiently large. The collection of sets $U^*$ is a basis for the topology of $\beta(S)$ and it can be shown that $\beta(S)$ is a nonempty compact, separable and totally disconnected topological space. 
It is routine to show that the equivalence classes $p^*$ of $S_\gamma$ are in bijective correspondence with the maximal infinite directed paths $\pi$ in $T$, and that the ideal boundary $\beta(S_\gamma)$ is homeomorphic to $C_T$. 

A pathwise connected noncompact subset $P$ of $S$ is said to be \emph{planar} if it is homeomorphic to a subset of the plane.

\begin{definition}\label{d:idealboundary}
A point $p^*$ in $\beta(S)$ represented by $p = P_1\supset P_2 \supset \dots $ is planar (resp. orientable) if the sets $P_n$ are planar (resp. orientable) for all sufficiently large $n$.
The subset $\beta'(S)$ (resp. $\beta''(S)$) consists of the points $p^*$ that are planar (resp. orientable). 
\end{definition}

For a model surface $S_\gamma$ with tree $T$ the set of all infinite paths $\pi_x$, for $x$ in $C_T$, that share a particular vertex $v$ determines an open set and these sets provide a base for the topology. It follows that $\beta'(S_\gamma)$ and $\beta''(S_\gamma)$ are open sets. Also the complement of $\beta'(S_\gamma)$ (resp. $\beta''(S_\gamma)$) is the closed subset of points $x$ in $C_T$ whose paths $\pi$ have countably many vertices that are sources of directed paths that include a vertex $w$ with $\gamma(w) \neq \bS_0$ (resp. with $\gamma(w) \neq \bS_0, \bS_1$). 

In the next proof it is convenient to consider the \emph{inflated binary tree} $T_{\rm bin}^*$ which we can take to be the directed subtree obtained from $T_{\rm bin}$ by replacing each edge $vv'$ by edges $vw, wv'$. Once again there is a bijection between the set of infinite paths $\pi$ a Cantor set that we denote as $C_{\rm bin}^*$.

\begin{thm}\label{t:richards}
Let $X$ be a nonempty compact separable totally disconnected Hausdorff space with open subsets $X'\subseteq X''$. Then there is a surface $S_\nu$ and a homeomorphism $\phi: \beta(S_\nu)\to X$ with $\phi(\beta'(S_\nu))=X'$ and $\phi(\beta''(S_\nu))=X''$.
\end{thm}


\begin{proof}
The space $X$ is homeomorphic to a subset of a Cantor set, with the relative topology and so we may assume then that $X$ is a subset of $C_{\rm bin}^*$. Let $T$ be the subtree of $T_{\rm bin}^*$ determined by $X$. We now define a map $\nu:T\to 
\{\bS_0, \bS_1, \bP\}$. Note that for any such map $\beta(S_\nu)=X$. 

Suppose first that $S$ is infinitely nonorientable, that is, that the complement of $X''$ in $X$ is nonempty. Let $\nu(v)=\bS_0$ for $v=v_\phi$ and for each vertex $v$ with degree 3. For $x$ in $Z$ with path $\pi_x$ let $\nu(w)=\bP$ for each vertex $w$ in $\pi_x$ with degree 2. For $x$ in $X''\backslash X'$ with path $\pi_x$ let $\nu(w)=\bS_1$ for each vertex $w$ in $\pi_x$ with degree 2 for which  $\nu(w)$ is not yet defined. Finally, for $x$ in $X'$ with path $\pi_x$ let $\nu(w)=\bS_0$ for each vertex $w$ in $\pi_x$ with degree 2 for which  $\nu(w)$ is undefined. Then 
$\beta'(S_\nu)=X', \beta''(S_\nu)=X''$ and the proof is complete in this case. The remaining cases, with $X''=X$ follow similarly.
\end{proof}

We give a simple proof of Ker\'ekj\'art\'o's theorem based on Theorem \ref{t:modelsufficiency} and the transparency of the invariants for model surfaces.  
The main ingredient is the involvement of ``summand swapping" homeomorphisms. Consider, for example, the infinitely nonorientable noncompact surfaces
\[
S_4=\bS_1+\bP +\bS_1+\bP +\dots , \quad S_\infty=\bP +\bP +\bP +\bP +\dots 
\]
where $\beta(S)=\beta'(S)=\beta''(S)$ is a singleton in both cases. 
Then there is a homeomorphism, 
$\sigma_4:S_4\to S_5$ say, where $S_5=\bP+\bS_1+\bS_1+\bP +\bS_1+ \dots $,
that is the identity map between the complement of the component compact bordered subsurface $\bS_1+\bP\backslash \bD$ in $S_4$ and the complement of $\bP+\bS_1\backslash \bD$ in $S_5$. Similarly there is a summand swapping homeomorphism $\sigma_5: S_5\to S_6$, with 
\[
\sigma_5: S_5=\bP+(\bS_1+\bS_1+\bP) +\bS_1+\dots\quad  \to \quad S_6=
\bP+(\bP+\bS_1+\bS_1) +\bS_1+\dots 
\]
that is the identity map between the complements of the compact bordered surfaces corresponding to the bracketed summands. Continuing, define similarly the maps $\sigma_6, \sigma_7,\dots $. For each point $x$ in $S_4$ the
compositions $\sigma_n\circ \sigma_{n-1}\circ \dots \circ \sigma_4(x)$ are eventually constant as $n$ tends to infinity and so a map $\sigma:S_5 \to S_\infty$ is defined. This map is continuous, injective and surjective, and so a homeomorphism. 

Similarly, a connected sum $\sum_k S_k$ with $S_k=\bP, \bS_1$ or $\bS_0$ is homeomorphic to such a sum in \emph{standard form} in the sense that 
an $\bS_0$-summand cannot precede another type of summand, and an $\bS_1$-summand cannot precede a $\bP$-summand. 

Concatenations of swapping homeomorphisms can be similarly defined between model surfaces, and it similarly follows that every model surface $S_\gamma$ is homeomorphic to one in standard form wherein the same precedences prevail in every infinite path.

\begin{thm}\label{t:Kthm} Let $S_1$ and $S_2$ be noncompact surfaces having the same genus and the same orientability type.
Then $S_1$ is homeomorphic to $S_2$ if and only if there is a homeomorphism $\phi:\beta(S_1)\to \beta(S_2)$ such that $\phi(\beta'(S_1))= \phi(\beta'(S_2))$ and $\phi(\beta''(S_1))= \phi(\beta''(S_2))$. 
\end{thm}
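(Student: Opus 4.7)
The necessity direction is straightforward: any homeomorphism $h:S_1\to S_2$ sends compact bordered subsurfaces to compact bordered subsurfaces of the same reduced genus and orientability, so it preserves both genus and orientability type, and it induces a homeomorphism $\phi=h_*:\beta(S_1)\to\beta(S_2)$ which carries planar (resp.\ orientable) ends to planar (resp.\ orientable) ends, since these properties are defined by a neighbourhood basis of the end. So the content lies in the converse, and my plan is to reduce to the model surfaces by Theorem \ref{t:modelsufficiency}, writing $S_i\cong S_{\gamma_i}$ on labelled trees $(T_i,\gamma_i)$, and then to build a homeomorphism $S_{\gamma_1}\to S_{\gamma_2}$ by a back-and-forth exhaustion argument.

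I would set up canonical exhaustions $S_{\gamma_i}=\bigcup_n B_n^{(i)}$ as in the proof of Theorem \ref{t:modelsufficiency}, so that the boundary cycles of $B_n^{(i)}$ induce a clopen partition $\P_n^{(i)}$ of $\beta(S_i)\cong C_{T_i}$, with each cell further decomposed by intersection with $\beta'(S_i)$ and $\beta''(S_i)$. Since $\beta(S_i)$ is compact and totally disconnected and $\phi$ is a homeomorphism preserving $\beta'$ and $\beta''$, I can refine the two exhaustions alternately (absorbing additional finite collections of component subsurfaces at each step) so that after renumbering $\phi$ carries $\P_n^{(1)}$ to $\P_n^{(2)}$ for every $n$, with matching cells having the same intersections with $\beta'$ and $\beta''$. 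After this alignment $B_n^{(1)}$ and $B_n^{(2)}$ have the same number of boundary cycles, in $\phi$-correspondence, and the components of the complement on each side match under $\phi$ in their planarity and orientability status.

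At each stage I then construct a homeomorphism $h_n:B_n^{(1)}\to B_n^{(2)}$ extending $h_{n-1}$. This rests on the classification of compact bordered surfaces: two such surfaces are homeomorphic iff they agree in number of boundary cycles, reduced genus, and orientability, and moreover a homeomorphism prescribed on a distinguished boundary cycle extends across whenever the invariants match. The labelling freedom illustrated by the pair $(S_\gamma,S_\mu)$ in the remarks before Theorem \ref{t:richards} lets me absorb finitely many extra handles or crosscaps from the tails $S_{\gamma_i}\setminus B_n^{(i)}$ into $B_n^{(i)}$ by redistributing labels within the relevant finite subtrees of $T_i$, so as to bring the reduced genera and orientability types of $B_n^{(1)}$ and $B_n^{(2)}$ into agreement; applied to the annular shells $B_n^{(i)}\setminus B_{n-1}^{(i)}$ this produces $h_n$, and the union $\bigcup_n h_n$ yields the desired homeomorphism.

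The main obstacle is the bookkeeping needed to keep the back-and-forth consistent: at stage $n$ only finitely many labels may be absorbed from each tail, yet the choices made at stage $n$ must not preclude the required matching at stages $n+1,n+2,\dots$. This is precisely where the hypothesis of equal total genus and equal orientability type is essential, for it rules out the obstruction exhibited by the triple $S_1,S_2,S_3$ in the remarks preceding Theorem \ref{t:richards}, in which exhaustions on one side are forced into a parity of crosscaps that cannot be matched on the other. Boundary cases — infinite genus, $\beta''=\emptyset$, fully orientable ends — are handled by the same argument with trivial modifications, the orientable case requiring only that locally compatible orientations be carried along by $h_n$.
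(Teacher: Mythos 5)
Your proposal follows essentially the same route as the paper's proof: reduce to model surfaces $S_\gamma$, $S_\mu$ via Theorem \ref{t:modelsufficiency}, align exhaustions by compact bordered subsurfaces whose cumulative reduced genus and orientability type agree (the paper calls this a ``purely combinatorial'' claim about matching subtree chains, which is what your label-redistribution and back-and-forth bookkeeping makes explicit), and extend homeomorphisms across annular shells using the classification of compact bordered surfaces. Your treatment of the necessity direction and your clopen-partition phrasing are amplifications the paper leaves implicit, but the underlying argument is the same.
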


\begin{proof} It remains to show that if  $\phi$ exists then there is a homeomorphism $\sigma:S_1\to S_2.$
By Theorem \ref{t:modelsufficiency} and Corollary \ref{c:noterminals} we may assume that $S_1=S_\gamma$ and $S_2=S_\mu$, where $T_\gamma$ and $T_\mu$ have no terminal vertices. Also we may assume  $T_\gamma \subseteq T_{\rm bin}^*, T_\mu  \subseteq T_{\rm bin}^*$. Since $\phi$ gives a bijection, $\phi_T$ say (with $\phi_T(\pi_x) = \pi_{\phi(x)}$), between the infinite paths of $T_\gamma$ and $T_\mu$, we may assume further that $T_\gamma = T_\mu= T$ and $\phi$ is the identity map on $X=C_T$. The labelling maps $\gamma, \mu$ are now defined on $T$
and we write $X'$ for  $\beta'(S_\gamma)= \beta'(S_\mu)$ and 
$X''$ for  $\beta''(S_\gamma)= \beta''(S_\mu)$.
Each of the surfaces $S_\gamma$ and $S_\mu$ is homeomorphic by a summand swapping homeomorphism to a model surface in standard form. Since $S_\gamma $ and $S_\mu$ have the same genus and orientability type it follows that these standard forms coincide, and so
$S_\gamma $ and $S_\mu$ are homeomorphic.
\end{proof}

\section{Triangulations of noncompact surfaces}\label{s:triangulations of noncompact surfaces}
We show that every noncompact surface $S$ has a $(3,6)$-tight triangulation in the sense of the next definition. This is done in a constructive manner, in the setting of a model surface, by means of moves corresponding to Henneberg 0-extension, vertex-splitting and finite graph substitutions.   
The Maxwell count $f(G)$ of a finite graph $G=(V,E)$ is $3|V|-|E|$.

\begin{definition}\label{d:36tight} 
(i) A finite graph is \emph{$(3,6)$-sparse} if it satisfies the local count 
$f(G')\geq 6$ for each subgraph $G'=(V',E')$ with at least 3 vertices and is \emph{$(3,6)$-tight} if in addition $f(G)=6$. 
(ii) A countable graph $G$ is \emph{$(3,6)$-sparse} (resp. \emph{$(3,6)$-tight}) if there exists a tower $G_1\subset G_2\subset \dots $ of finite $(3,6)$-sparse subgraphs (resp. $(3,6)$-tight subgraphs) whose union is $G$.
\end{definition}

\subsection{Constructive $(3,6)$-tight triangulations}\label{ss:constructive}
The surface $\bS_0\backslash 3\bD$ with boundary $\partial B$ consisting of disjoint cycles $a,b,c$ is depicted in Figure \ref{f:pants2legs}. 
The next lemma will enable the extensions of finite $(3,6)$-tight triangulations that correspond to the branching aspect  of a subtree $T\subset T_{\rm bin}$.
 \begin{center}
\begin{figure}[ht]
\centering
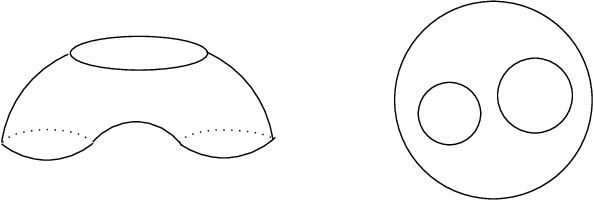
\caption{Depictions of the branching surface $B=\bS_0\backslash 3\bD$.} 
\label{f:pants2legs}
\end{figure}
\end{center}

The Hennenberg 0-extension move, in the context of $(3,6)$-tight graphs, is the graph move $G\to G'$ that adds a single vertex $v$  of degree 3 and  edges $w_1v, w_2v, w_3v$ with $w_1, w_2, w_3$ distinct vertices of $G$. For surface embedded graphs a 0-extension move can be used to introduce a vertex of degree 3 in a face and in this way we may replace any face by new faces. 

\begin{lemma}\label{l:pants2legs}
let $G$ be a $(3,6)$-tight graph with a cycle $d$ of length $|d|\geq 3$. Then for any values of $|b|\geq 3, |c|\geq 3$ with  
$|d|-3=|b|-3+|c|-3$ there is a triangulation $H$ of $B= \bS_0\backslash 3\bD$, with boundary cycles $a, b, c$ of lengths $|a|, |b|, |c|$ with $|a|=|d|$ such that the join $G^+= G\cup_{d=a} H$ is $(3,6)$-tight. Moreover, there exist such triangulations $H$ that may be obtained from the cycle $d$ by 0-extension moves.
\end{lemma}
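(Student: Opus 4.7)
The plan is to verify the Maxwell count arithmetically, to construct $H$ explicitly by a sequence of $0$-extensions, and to deduce $(3,6)$-sparsity from a standard preservation argument.

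First I would compute $f(G^+)$. Identifying the cycle $d$ of $G$ with the outer boundary cycle $a$ of $B$, the join satisfies $V(G^+) = V(G) + V(H) - |a|$ and $E(G^+) = E(G) + E(H) - |a|$, so
$$f(G^+) = f(G) + \bigl(3V(H) - E(H)\bigr) - 2|a|.$$
For the triangulation $H$ of the pants $B$, Euler's identity $V(H) - E(H) + F(H) = \chi(B) = -1$ combined with the face-edge incidence $3F(H) = 2E(H) - (|a| + |b| + |c|)$ gives $3V(H) - E(H) = |a| + |b| + |c| - 3$. The length hypothesis $|a| = |b| + |c| - 3$ and $(3,6)$-tightness of $G$ then yield $f(G^+) = 6$.

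Next I would construct $H$ by $0$-extensions starting from the cycle $a = (a_1, \ldots, a_n)$ with $n = |b| + |c| - 3$. The first move adds a vertex $v_0$ in the pants interior adjacent to three vertices of $a$ that partition the cycle into arcs of lengths $1$, $|b|-2$ and $|c|-2$; for instance $a_1$, $a_2$, $a_{|b|}$. This splits the pants interior into a triangular face (from the length-$1$ arc), an annular region $R_b$ of outer boundary length $|b|$ containing one of the inner holes of $B$, and an analogous annular region $R_c$ of outer boundary length $|c|$ containing the other hole. Each annulus is then triangulated by further $0$-extensions: from an annulus of outer boundary length $\ell$, a new vertex $w$ is added adjacent to three consecutive vertices of the outer boundary, creating two triangular faces and leaving a smaller annular region (still of outer length $\ell$) containing the hole, now with $w$ on its outer boundary. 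After $\ell$ such moves, the $\ell$ new vertices form a cycle which, by a suitable embedding of the final $0$-extension, is realised on the hole's boundary circle as the inner cycle of length $\ell$. The outcome is a triangulation of $B$ with boundary cycles of the required lengths.

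To conclude, I would invoke the standard fact that $0$-extension preserves $(3,6)$-sparsity: if $K'$ arises from a $(3,6)$-sparse graph $K$ by adding a vertex $v$ of degree $3$ with distinct neighbours, and $K'' \subseteq K'$ has at least three vertices, then either $v \notin V(K'')$ (so $f(K'') \geq 6$ from sparsity of $K$), or $v \in V(K'')$, in which case $f(K'') = f(K'' - v) + 3 - \deg_{K''}(v) \geq 6$ (the case $|V(K'')| = 3$ being handled directly). Since $G^+_0 = G$ is $(3,6)$-tight and each $0$-extension in the construction preserves $(3,6)$-sparsity, the resulting graph $G^+$ is $(3,6)$-sparse; combined with $f(G^+) = 6$, it is $(3,6)$-tight. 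The main obstacle will be justifying the embedding choices in the second paragraph: one must verify that the sequence of $0$-extensions can indeed be drawn on $B$ so that the final graph is a genuine triangulation of $B$ with inner boundary cycles $b$, $c$ of the prescribed lengths, rather than leaving some non-triangular interior region.
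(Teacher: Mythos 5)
Your proof is correct and follows essentially the same route as the paper: a single $0$-extension in the disc bounded by $d$ to produce the two sub-cycles of lengths $|b|$ and $|c|$, then a sequence of $0$-extensions inside each sub-cycle to create a disjoint boundary cycle, followed by the standard observation that $0$-extension preserves $(3,6)$-sparsity and the Maxwell count. Your first paragraph's Euler-characteristic computation of $f(G^+)$ is redundant (the $0$-extensions already preserve the Maxwell count), and your closing worry about the embedding choices is unfounded, since each $0$-extension is performed inside a planar face of the current graph and so is automatically realisable in $B\subset \bS_0$.
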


\begin{proof}
Apply a 0-extension move to the graph of the cycle $d$, viewed as a planar graph, as  illustrated in Figure \ref{f:hennenbergconstn}(i), to create further cycles $b', c'$ of  lengths $|b'|=|b|, |c'|=|c|$. Apply further 0-extension moves, within the face for the $b'$-cycle to obtain a cycle $b$, of the same length, that is disjoint from the cycle $d$, as  illustrated in Figure \ref{f:hennenbergconstn}(ii). Doing the same for $c'$ we obtain a triangulation $H$ of $\bS_0\backslash 3\bD$.
The 0-extension move preserves $(3,6)$-sparsity and $(3,6)$-tightness. Thus the proof of the lemma is completed on observing that $G^+$ is constructed from $G$ by 0-extension moves.
\end{proof}
 \begin{center}
\begin{figure}[ht]
\centering
\includegraphics[width=7cm]{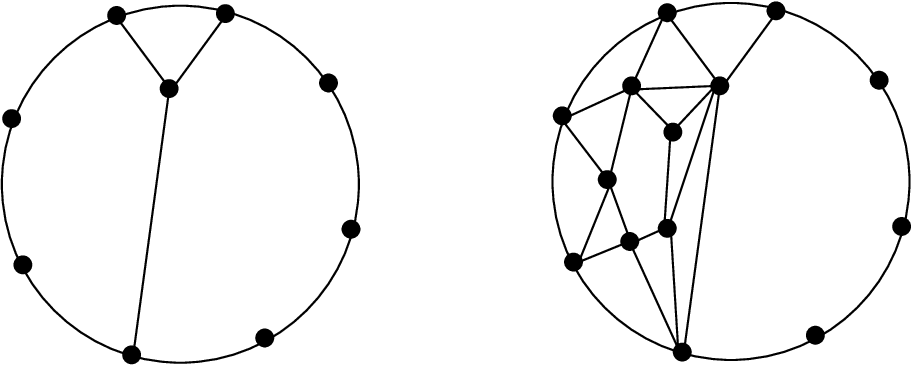}
\caption{(i) A Hennenberg 0-extension move, adding a degree 3 vertex and 3 incident edges, creating cycles of length 5 and 6. (ii) Further 0-extension moves giving a 5-cycle that is disjoint from the 8-cycle and the 6-cycle.} 
\label{f:hennenbergconstn}
\end{figure}
\end{center}

The following elementary substitution lemma features in the proof of Lemma \ref{l:pants1leg} and is useful for sequential constructions. It has a companion lemma, with the $(3,6)$-tight condition replaced by minimal 3-rigidity, that we make use of in Section \ref{s:barjointframeworks}.

\begin{lemma}\label{l:substitution}
Let $A, A'$ be compact bordered surfaces, let $G\subset A, G'\subset A'$ be $(3,6)$-tight triangulations, and let $d, d'$ be cycles of edges of $G, G'$ corresponding to one of the boundary cycles of $A, A'$. Suppose that $B$ is a compact bordered surface with a triangulation $H$ with a boundary cycle $a$ and that $|a|=|d|=|d'|$. Then $G\cup_{d=a} H$ is $(3,6)$-tight if and only if $G'\cup_{d'=a} H$ is $(3,6)$-tight.
\end{lemma}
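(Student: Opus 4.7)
The plan is to prove the equivalence in two stages: first, the equality of the global Maxwell counts $f(G\cup_{d=a} H) = f(G'\cup_{d'=a} H)$, which controls the equation $f=6$; and second, the equivalence of the subgraph sparsity conditions, which is the structurally deeper half. By symmetry, for the sparsity part it suffices to assume $G\cup_{d=a} H$ is $(3,6)$-tight and deduce the same for $G'\cup_{d'=a} H$.

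For the global count I would establish and use the inclusion--exclusion identity
\[
f(X\cup Y) \;=\; f(X) + f(Y) - f(X\cap Y),
\]
which is immediate from $f(\cdot) = 3|V(\cdot)| - |E(\cdot)|$ and the corresponding identities for the vertex and edge sets. Applied to $G^+ = G\cup_{d=a} H$ with $X\cap Y$ equal to the common $n$-cycle $d=a$ (so $f(d) = 3n - n = 2n$, where $n = |d|$), this yields $f(G^+) = f(G) + f(H) - 2n = 6 + f(H) - 2n$. The same computation applied to $G'^+$ gives the identical value, so the two Maxwell counts agree, and in particular $f(G^+) = 6$ exactly when $f(G'^+) = 6$, equivalently when $f(H) = 2n$.

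For the sparsity half, I would take an arbitrary subgraph $K\subseteq G'^+$ with $|V(K)|\geq 3$, decompose it as $K = K_{G'}\cup K_H$ with $K_{G'} = K\cap G'$ and $K_H = K\cap H$, and set $c = K\cap d'$. The inclusion--exclusion identity gives $f(K) = f(K_{G'}) + f(K_H) - f(c)$, and I would handle three cases. If $K\subseteq G'$, then $(3,6)$-sparsity of $G'$ gives $f(K)\geq 6$. If $K\subseteq H$, then $K$ is also a subgraph of the common piece $H$ inside $G^+$, and sparsity of $G^+$ gives $f(K)\geq 6$. The remaining ``crossing'' case, where both $K_{G'}$ and $K_H$ have nontrivial structure outside the boundary cycle, is the essential one: here I would transfer $K$ to a subgraph $L\subseteq G^+$ by keeping $L_H = K_H$ and replacing $K_{G'}$ by a subgraph $L_G\subseteq G$ with the same boundary intersection $L_G\cap d = c$ (viewed in $d$ via the identification $d=d'$) and the same Maxwell count $f(L_G) = f(K_{G'})$, so that $f(L) = f(K)$; tightness of $G^+$ then forces $f(K)\geq 6$.

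The main obstacle is the construction of $L_G$ in the crossing case, i.e., verifying that the set of Maxwell counts realised by subgraphs $L_G\subseteq G$ with $L_G\cap d = c$ coincides with the corresponding set for subgraphs of $G'$ with intersection $c$ on $d'$. This is the point at which the hypothesis that \emph{both} $G$ and $G'$ are $(3,6)$-tight triangulations of bordered surfaces realising the same boundary $n$-cycle enters in an essential way: the tight count $f(G) = f(G') = 6$, combined with the triangulated-surface structure near the common boundary, pins down the combinatorics of extensions of $c$ into the interior sufficiently for the transfer to go through. Once this equivalence of achievable counts is established, applying it in both directions, together with the matching of the global Maxwell counts from the first step, closes the argument.
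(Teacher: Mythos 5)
Your overall scaffolding matches the paper's: inclusion--exclusion for the global count, and the same three-way case split with $K\subseteq G'$ and $K\subseteq H$ handled immediately. The divergence, and the gap, is in the crossing case.

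You propose to transfer $K$ to a subgraph $L\subseteq G\cup H$ by replacing $K_{G'}=K\cap G'$ with some $L_G\subseteq G$ satisfying both $L_G\cap d = K\cap d'$ and $f(L_G)=f(K_{G'})$, so that $f(L)=f(K)$ and tightness of $G\cup H$ finishes. You correctly flag this transfer as ``the main obstacle,'' but the justification you sketch --- that tightness of $G,G'$ and the triangulated-surface structure ``pins down the combinatorics of extensions of $c$'' --- is not an argument, and the required $L_G$ need not exist: nothing in the hypotheses forces $G$ to contain a subgraph meeting $d$ in the prescribed vertex/edge pattern with \emph{exactly} the prescribed Maxwell count. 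Matching counts exactly is asking for far more than the lemma needs.

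The paper sidesteps this entirely by taking $L_G=G$ itself, which trivially meets $d$ correctly, and observing that a one-sided bound suffices. Sparsity of $G\cup H$ applied to $G\cup(K\cap H)$ gives
\[
6\leq f\bigl(G\cup(K\cap H)\bigr)=f(G)+f(K\cap H)-f\bigl((K\cap H)\cap G\bigr)=6+f(K\cap H)-f\bigl((K\cap H)\cap G\bigr),
\]
so the ``excess'' $f(K\cap H)-f((K\cap H)\cap G)$ is nonnegative. Since $(K\cap H)\cap G$ and $(K\cap H)\cap G'$ are identical under the identification $d=a=d'$, inclusion--exclusion for $K=(K\cap G')\cup(K\cap H)$ gives $f(K)=f(K\cap G')+\bigl[f(K\cap H)-f((K\cap H)\cap G)\bigr]\geq f(K\cap G')\geq 6$, the last step by sparsity of $G'$. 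In other words, $f(G)=6\leq f(K_{G'})$ plus the nonnegative excess already delivers $f(K)\geq 6$; no realisability of exact Maxwell counts is needed. Your proposal would need to either supply a genuine construction of $L_G$ (which I do not think is available) or be reorganised around this one-sided comparison.
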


\begin{proof}
Let $G\cup H$ be $(3,6)$-tight and let  $K\subset G'\cup H$ be a subgraph with at least 3 vertices. We claim that $f(K)\geq 6$. If $K\subset G'$ or $K\subset  H$ this is clear and so we may assume that $K$ has edges in $G'$ and in $H$. Now
\[
6\leq f(G\cup (K\cap H))=f(G)+f(K\cap H) -f((K\cap H)\cap G)
\]
and so $f(K\cap H) -f((K\cap H)\cap G)\geq 0$. Also, $f((K\cap H)\cap G)= f((K\cap H)\cap G')$. Thus
\[
f(K)=f((K\cap G')\cup (K \cap H)=f(K\cap G')+(f(K\cap H) - f(K\cap H\cap G)),
\]
and so $f(K)\geq 6$. It follows similarly that $f(G'\cup H)=6$.
\end{proof}

In the proof of the next lemma we make use of the triangulations given in Figure \ref{f:small36tight}. It is elementary to show that, as graphs, they are generated from $K_3$ by a sequence of 0-extension moves.

We define the \emph{discus graph} $\D_r$, with $r\geq 3$ perimeter vertices, as the $(3,6)$-tight graph obtained from an $r$-cycle by adding two vertices and the $2r$ edges from these vertices to the $r$-cycle vertices. In particular
$\D_3=K_5\backslash e$. In the next key lemma we use 0-extension and vertex-splitting moves on the $(3,6)$-tight embedded graphs of Figure \ref{f:small36tight} to introduce an embedded discus graph adjacent to an enlargement of the nontriangular face, and this discus graph is ultimately substituted by a $(3,6)$-tight triangulation of the bordered surface $G$. 

 \begin{center}
\begin{figure}[ht]
\centering
\includegraphics[width=3cm]{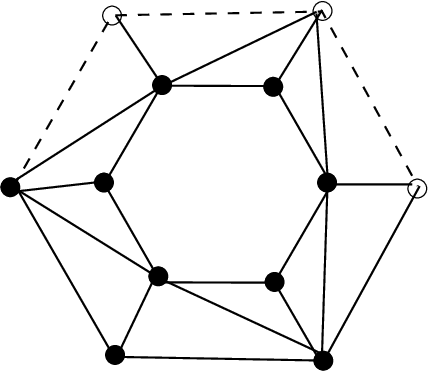}\quad \quad \quad \quad
\includegraphics[width=2.5cm]{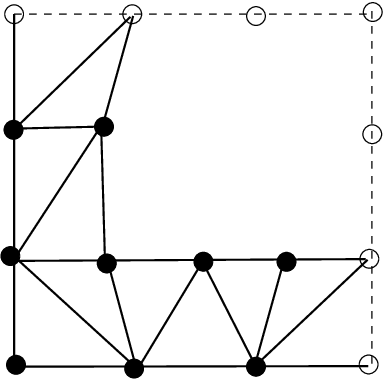}
\caption{$(3,6)$-tight triangulations of the bordered surfaces  (i) $B= \bP\backslash \bD$ with $g_r(B)=1/2$, (ii) 
$B= \bS_1\backslash \bD$ with $g_r(B)=1$. 
} 
\label{f:small36tight}
\end{figure}
\end{center}

 \begin{center}
\begin{figure}[ht]
\centering
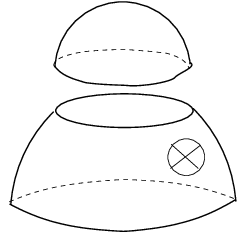
\caption{The case $B=\bP\backslash 2\bD$ for which a triangulation $H$ is required, with $|a|=|d|$ and $|b|=|a|+3$ so that the join of $G$ and $H$ is $(3,6)$-tight.} 
\label{f:projPlaneAddition}
\end{figure}
\end{center}

\begin{lemma}\label{l:pants1leg}
Let $G$ be a $(3,6)$-tight graph with a cycle $d$ of length $\delta\geq3$ and let $B$ be one of the compact bordered surfaces $\bS_0\backslash 2\bD, \bP\backslash 2\bD, \bS_1\backslash 2\bD$.
Suppose that 
$\beta=\delta+6g_r(B)$. Then there is a triangulation $H$ of $B$ with boundary cycles $a, b$ of lengths $\delta, \beta$ such that the join $G^+= G\cup_{d=a} H$ is $(3,6)$-tight 
\end{lemma}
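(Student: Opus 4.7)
The plan is to verify the Maxwell count, give an explicit construction of $H$ using (3,6)-tight-preserving moves, and deduce sparsity of the join by a substitution argument.

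I first dispatch the Maxwell count. For any triangulation $H$ of a compact bordered surface $B$ with boundary cycles of lengths $\alpha$ and $\beta$, the Euler formula combined with the triangle/edge incidence $3|F| = 2|E| - (\alpha+\beta)$ yields
\[
f(H) \;=\; 3\chi(B) + \alpha + \beta \;=\; \alpha + \beta - 6g_r(B).
\]
Since $f(G\cup_{d=a}H) = f(G) + f(H) - 2\delta$ and $f(G) = 6$, the requirement $f(G^+) = 6$ is equivalent to $\beta = \delta + 6g_r(B)$, exactly the hypothesis. So the global count is automatic and it remains to produce $H$ and verify local sparsity.

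For the construction of $H$, I treat the three surfaces separately. When $B = \bS_0\backslash 2\bD$, I build a triangulated annulus around the $\delta$-cycle $a$ by repeated Henneberg 0-extensions, producing a disjoint second $\delta$-cycle $b$, as in the proof of Lemma \ref{l:pants2legs}. When $B\in\{\bP\backslash 2\bD,\bS_1\backslash 2\bD\}$, I begin with the (3,6)-tight triangulation $T_0$ of $\bP\backslash\bD$ (resp.\ $\bS_1\backslash\bD$) from Figure \ref{f:small36tight} and apply a finite sequence of 0-extensions and vertex splits, each preserving (3,6)-tightness, to produce a (3,6)-tight triangulation $\tilde G$ of the same bordered surface whose boundary cycle has length $\beta = \delta + 6g_r(B)$ and which contains in its interior a discus subgraph $\D_\delta$ supported on some $\delta$-cycle $c$. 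Deleting the two apex vertices $u_1,u_2$ of this discus defines
\[
H \;:=\; \tilde G \setminus \{u_1,u_2\},
\]
a (3,6)-sparse triangulation of $B$ whose two boundary cycles are $c$ (of length $\delta$) and the former outer boundary of $\tilde G$ (of length $\beta$).

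For (3,6)-tightness of $G^+ := G\cup_{d=c}H$ the Maxwell count is already $6$. To check local sparsity, take any $K\subseteq G^+$ with $|V(K)|\geq 3$ and decompose $K = K_G\cup K_H$ with $K_G = K\cap G$, $K_H = K\cap H$ and $K_\cap = K_G\cap K_H\subseteq c$. In the spirit of Lemma \ref{l:substitution}, form the auxiliary subgraph $\tilde K := \D_\delta\cup K_H$ of $\tilde G$; since $u_1,u_2\notin H$ and $K_H\cap c = K_\cap$, one has $\D_\delta\cap K_H = K_\cap$, so
\[
f(\tilde K) \;=\; f(\D_\delta) + f(K_H) - f(K_\cap) \;=\; 6 + f(K_H) - f(K_\cap).
\]
Sparsity of $\tilde G$ forces $f(K_H)\geq f(K_\cap)$, and therefore
\[
f(K) \;=\; f(K_G) + f(K_H) - f(K_\cap) \;\geq\; f(K_G) \;\geq\; 6
\]
in the main case $|V(K_G)|\geq 3$; the residual cases $|V(K_G)|\leq 2$ reduce either to $K\subseteq H$ (handled by sparsity of $\tilde G$) or to a nearly disjoint union with small $K_G$, and are handled by elementary counts.

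The main obstacle is the construction step: arranging the 0-extensions and vertex splits so that $\tilde G$ simultaneously achieves the prescribed outer boundary length $\beta$ and hosts an interior discus block $\D_\delta$ of the prescribed size $\delta$. This is a careful but essentially routine combinatorial construction on the base triangulations of Figure \ref{f:small36tight}, in the spirit of the proof of Lemma \ref{l:pants2legs}.
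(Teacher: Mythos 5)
Your proposal is correct and follows essentially the same route as the paper: build, by 0-extensions and vertex splits from the base triangulations of Figure \ref{f:small36tight}, a (3,6)-tight graph of the form $\D_\delta\cup_{c}H$ with $H$ a triangulation of $B$ having boundary cycles of lengths $\delta$ and $\beta$, and then replace $\D_\delta$ by the given $G$ via a substitution argument (the paper simply cites Lemma \ref{l:substitution}, where you re-derive it inline). The only additions are cosmetic — the explicit Euler/Maxwell check at the outset, which the paper leaves implicit — and, like the paper, the actual combinatorial construction of $\tilde G$ with prescribed $\delta$-block and $\beta$-boundary is the one real piece of work, which you correctly flag as the crux and defer to the moves illustrated in Figures \ref{f:hennenbergconstn} and \ref{f:BandHmove}.
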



\begin{proof}
Let $H_1$ be the $(3,6)$-tight triangulation of $\bP\backslash \bD$ given in Figure \ref{f:small36tight}(i). Let $K_3\cup_e C_6$ be the subgraph composed of the boundary 6-cycle, $C_6$, of the nontriangular face together with a facial 3-cycle $K_3$ sharing an edge $e$ of this 6-cycle.  
This subgraph is denoted $K_3\cup_e C_6$.
Applying two 0-extension moves to $K_3$ creates $\D_3\cup C_6$, where $\D_3$ is the discus graph with a 3-cycle perimeter.
This move is depicted by the first arrow in Figure \ref{f:BandHmove}.
 \begin{center}
\begin{figure}[ht]
\centering
\includegraphics[width=10cm]{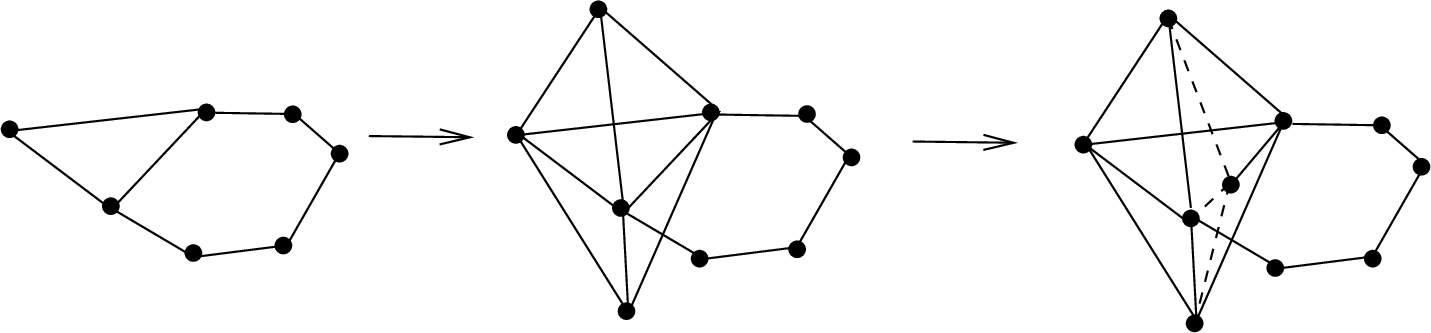}
\caption{A double 0-extension move, $K_3\cup_e C_6 \to \D_3\cup_e C_6$, followed by a vertex-splitting move $\D_3\cup_e C_6 \to \D_4\cup_e C_7$.} 
\label{f:BandHmove}
\end{figure}
\end{center}
The second arrow of this figure depicts a vertex splitting move on a perimeter vertex of  $\D_3$ to create $\D_4\cup C_7$. Such a move, replacing an edge by a vertex and 4 edges in this manner, preserves $(3,6)$-sparsity. Further vertex splitting moves are possible, if necessary, to create $\D_\delta\cup C_{\delta +3}$.

These construction moves on the subgraph of $H_1$ evidently extend to  moves on $H_1$. This results in a graph that contains the discus graph $\D_\delta$, with a perimeter cycle $d'$ of length $\delta$, 
and has the form $\D_\delta \cup_{d'=a} H'$ where $H'$ 
has boundary cycles  $a'$ and $b'$ of lengths $|a'|=\delta$ and $|b'|=\delta+3$.
The cycles $a', b'$ share some vertices, and so $H'$ is not yet a triangulation of $\bP\backslash 2\bD$. However, performing 0-extension moves on the vertices of $b'$ (in the manner of Figure \ref{f:hennenbergconstn}(ii)) yields a $(3,6)$-tight graph of the form $\D_\delta \cup_{d'=a} H$ where $H$ 
is a triangulation of $\bP\backslash 2\bD$ that has disjoint boundary cycles, $a$ and $b$, with lengths $|a|=\delta$ and $|b|=\delta+3$.
The lemma now follows in this case from  Lemma \ref{l:substitution}, by substituting the graph $G$ for the discuss subgraph $\D_\delta$.

The argument for $B=\bS_1\backslash 2\bD$, starting with the triangulation of $\bS_1\backslash \bD$ in Figure \ref{f:small36tight}(ii), is entirely similar, and the argument for $B=\bS_0\backslash 2\bD$ requires only 0-extensions and is a special case  Lemma \ref{l:pants2legs}. 
\end{proof}

\begin{thm}\label{t:S_hastriag}
Every noncompact surface has a triangulation that is $(3,6)$-tight.
\end{thm}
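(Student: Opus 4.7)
The plan is to use Theorem \ref{t:modelsufficiency} to reduce to the case $S=S_\gamma$ of a model surface, and then to construct a (3,6)-tight triangulation of $S_\gamma$ as an ascending union of finite (3,6)-tight subgraphs, processing the vertices of the underlying tree $T$ one at a time via the attachment lemmas established in this section.

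First I would enumerate the vertices of $T$ as $v_\phi=v_0,v_1,v_2,\dots$ in breadth-first order, so that every $v_n$ with $n\ge 1$ has its parent already listed. Take $G_0$ to be a finite (3,6)-tight triangulation of the initial piece $\gamma(v_\phi)\backslash \bD$: for $\gamma(v_\phi)=\bS_0$ use $K_3$ (triangulating the disc) and for $\gamma(v_\phi)\in\{\bS_1,\bP\}$ use the explicit triangulations of Figure \ref{f:small36tight}. In each case $G_0$ carries a single boundary cycle, which we designate as the \emph{entrance cycle} of the first child of $v_\phi$ in $T$.

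Inductively, suppose $G_n$ is a finite (3,6)-tight triangulation of the compact bordered subsurface of $S_\gamma$ assembled from the pieces for $v_0,\dots,v_n$, and that each current frontier vertex of the enumerated tree carries an entrance cycle in $G_n$. To build $G_{n+1}$, I would consider the degree $r\in\{1,2,3\}$ of $v_{n+1}$ in $T$ and its label $\gamma(v_{n+1})$. If $r=2$, attach the piece $\gamma(v_{n+1})\backslash 2\bD$ to the relevant entrance cycle by a single application of Lemma \ref{l:pants1leg}, which produces a fresh entrance cycle for the next vertex. If $r=3$, first attach $\gamma(v_{n+1})\backslash 2\bD$ by Lemma \ref{l:pants1leg} and then split the resulting exit cycle by attaching $\bS_0\backslash 3\bD$ via Lemma \ref{l:pants2legs}; the composite piece realises $\gamma(v_{n+1})\backslash 3\bD$ as a topological connected sum and gives two fresh entrance cycles for the children of $v_{n+1}$. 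If $r=1$, so that $v_{n+1}$ is terminal, cap the entrance cycle by a (3,6)-tight triangulation of $\gamma(v_{n+1})\backslash \bD$ of matching boundary length; this is a minor variant of Lemma \ref{l:pants1leg} in which the construction of an exit cycle is omitted and the substitution argument of Lemma \ref{l:substitution} is invoked. In every case the enlarged graph $G_{n+1}$ is finite and (3,6)-tight.

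The ascending union $G=\bigcup_n G_n$ is then a triangulation of $S_\gamma$, and it is (3,6)-tight in the sense of Definition \ref{d:36tight}(ii). The main technical point to verify is the compatibility of boundary-cycle lengths at each attachment: Lemma \ref{l:pants1leg} imposes only the harmless constraint $\delta\ge 3$ and delivers an exit cycle of length $\delta+6g_r(B)\ge 3$, while Lemma \ref{l:pants2legs} requires a split $|b|+|c|=\delta+3$ with $|b|,|c|\ge 3$, which is always feasible for $\delta\ge 3$, for instance via $|b|=3$, $|c|=\delta$. Hence the inductive step can always be executed, and the construction exhausts $S_\gamma$, yielding the desired triangulation.
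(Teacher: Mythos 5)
Your overall plan matches the paper's proof: reduce to a model surface $S_\gamma$ via Theorem \ref{t:modelsufficiency} and then build a (3,6)-tight triangulation along the tree by repeated applications of Lemma \ref{l:pants2legs} (for branching) and Lemma \ref{l:pants1leg} (for genus pieces), with (3,6)-tightness of the ascending union handled by Definition \ref{d:36tight}(ii). Processing the vertices one at a time in breadth-first order, rather than one level at a time as in the paper, is an inessential difference, and your verification of the arithmetic feasibility of the cycle lengths at each attachment is correct.

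However, your handling of terminal vertices ($r=1$) is a genuine gap. You claim there is a ``minor variant of Lemma \ref{l:pants1leg} in which the construction of an exit cycle is omitted,'' but no such variant exists, because capping a boundary cycle $d$ of length $\delta$ with a triangulation $H$ of $\gamma(v)\backslash\bD$ cannot preserve the Maxwell count in general. Indeed, by the face-walk identity one gets $f(H)=\delta+3$, $\delta$, or $\delta-3$ for $\gamma(v)=\bS_0,\bP,\bS_1$ respectively, and since $f(G_n\cap H)=2\delta$ the joined graph has $f(G_n\cup H)=9-\delta$, $6-\delta$, or $3-\delta$. So the cap only works when $\gamma(v)=\bS_0$ and $\delta=3$, and never works for $\bP$ or $\bS_1$. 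The cure is simply to observe that this case never arises: in the exhaustion produced in the proof of Theorem \ref{t:modelsufficiency} every complementary component of each $B_n$ is noncompact (compact ones can be absorbed into $B_n$), so every level-$n$ vertex has a child at level $n+1$ and the tree $T$ has no terminal vertices. This is implicitly why the paper restricts to $T=T_{\rm bin}$ ``the argument for subtrees being entirely similar''; you should make the no-terminal-vertex reduction explicit rather than invent a capping move that does not preserve (3,6)-tightness.
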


\begin{proof} By Corollary \ref{c:noterminals} we may assume that the surface is a model surface $S_\gamma$ associated with $\gamma: T \to \{\bS_0, \bS_1, \bP\}$ where $T$ is a subtree of $T_{\rm bin}^*$ with no terminal vertices. Note that $S_\gamma$ is equal to the infinite join of a sequence of component bordered surfaces $\S_0, \S_1, \dots, $ of the forms (i) $\bS_0\backslash \bD$ (for $\S_0$ corresponding to the vertex $v_\phi$), (ii)
$\bS_0\backslash 3\bD $ (for $\S_k$ corresponding to vertices of degree 3), and (iii) $S\backslash 2\bD$, where $S$ is 
$\bS_0, \bS_1,$ or  $\bP$ (coresponding to vertices of degree 2). The sequence corresponds to a natural exhaustive enumeration of the vertices of $T$.

Let $G_0$ be any triangulation of $\S_0=\bS_0\backslash \bD$ with boundary cycle a 3-cycle. This is a $(3,6)$-tight triangulation. Suppose, for some $k\geq 0$, that $G_k$ is a $(3,6)$-tight triangulation of the join
$S_k = \S_0\cup \S_1\cup \dots \cup \S_k.$  
If $S_{k+1}$ is obtained from $S_k$ by a join with $\bS_0\backslash r\bD$ for $r=2$ or $3$ then a $(3,6)$-tight triangulation $G_{k+1}$ of $S_{k+1}$ is provided by Lemma \ref{l:pants1leg} or \ref{l:pants2legs}. 
If $S_{k+1}$ is obtained from $S_k$ by a join with $\bS_1\backslash 2\bD$ or $\bP\backslash 2\bD$ then Lemma \ref{l:pants1leg} provides a triangulation $G_{k+1}$ by means of 0-extension moves, vertex splitting moves, and a $(3,6)$-tight subgraph substitution, and so, by  Lemma \ref{l:substitution}, $G_{k+1}$ is $(3,6)$-tight. The union of the triangulation $G_k$ gives the desired triangulation of $S_\gamma$.
 \end{proof}


\begin{rem}\label{r:blockandhole} In Cruickshank, Kitson and Power \cite{cru-kit-pow-1} a complete characterision of $(3,6)$-tight block and hole graphs \cite{fin-whi} with a single block was given in terms of the satisfaction of \emph{girth inequalities}. This gives another approach to Lemma \ref{l:pants1leg} with the $(3,6)$-tight graph $G$ playing the role of a single block. The girth inequalities here correspond to the genus 0 case of higher genus girth inequalities given in the next section.
\end{rem}

\section{General $(3,6)$-tight triangulations.}\label{s:characterisation}
We next give the characterisation of 
$(3,6)$-tight triangulations of 
compact bordered surfaces in terms of certain higher genus girth inequalities. 
This leads to an alternative nonconstructive proof of Theorem \ref{t:S_hastriag} by making use of barycentric subdivisions to ensure the satisfaction of the girth inequalities.

Let $G\subset S$ be a cellularly embedded finite graph in a compact surface $S$. We define a \emph{superface} of $G$ to be a face $U$ of a subgraph $H$ of $G$, where $U$ is not dense and $H$ has no vertices of degree 0 or 1. In particular the complement of $U$ contains at least one face of $G$. A \emph{balanced} superface is one for which the complement of its closure is connected and so also a superface. A 
\emph{simple} superface $U$ is one whose closed boundary walks, denoted $d_1, \dots , d_s$, are disjoint cycles. For a simple superface $U$ the topological boundary of $U$ is the union of the disjoint cycles $d_1, \dots , d_s$, and so the closure $U^-$ of $U$ is a compact bordered surface. 

Let $U$ be a  superface of $S$ which is both simple and balanced, with complementary superface $W$, so that both $U^-$ and $W^-$ are connected bordered surfaces. Then the following standard addition formula holds for the reduced genus. See also Richards \cite{ric}.
\[
g_r(S)=g_r(U^-)+g_r(W^-)+(s-1).
\]

If $G\subset S$ is cellular with  $f(G)=3v-e \geq 6$ and with nontriangular faces, $U_1, \dots , U_n$, with closed boundary walks $c_1, \dots,  c_n$, then the Euler formula for $G$ gives
\[
\sum_k (|c_k|-3) = 6g_r(S) + f(G)-6.
\]
We refer to this equality as the \emph{face walk identity} for the embedded graph $G$ in $S$. 

Let $G_W$ denote the subgraph of $G$ whose edges lie in the closure of $W$, the complementary superface of $U$, as above.
This embedded graph in $S$ is also an embedded graph in the bordered surface $W^-$. It is also an embedded graph in the surface, $S_W$ say, obtained by capping the $s$ boundary cycles of $U$ with open discs. Since $g_r(S_W)=g_r(W^-)= g_r(W)$ we may use these quantities interchangeably. It follows that we have a face walk identity for $G_W$ in $S_W$, namely 
\[
\sum_{k=1}^s (|d_k|-3) + \sum_{k\in I(W)} (|c_k|-3) = 6g_r(S_W) + f(G_W)-6.
\]
Here we write $I(W)$ for the set of indices $k$ for which
the face $U_k$ of $G$ is contained in $W$. 

\begin{lemma}\label{l:GIforSymmetric_UKW_CYCLES_B}
Let $G$ be a cellularly embedded graph in the compact surface $S$ with $f(G)=6$, let $U$ be a balanced, simple superface of $G$ with complementary superface $W$, and let $d_1, \dots , d_{s}$ be the common boundary cycles of  $U$ and $W$. Then the following inequalities are equivalent.
\medskip

(i) \quad  $f(G_W) \geq 6$.

(ii) 
\[
\sum_{k=1}^s (|d_k|-3) + \sum_{k\in I(W)} (|c_k|-3) \geq 6g_r(W).
\]

(iii) 
\[
\sum_{k=1}^s (|d_k|-3) \geq \sum_{k\in I(U)} (|c_k|-3)-6 (g_r(U)+s-1).
\]

\end{lemma}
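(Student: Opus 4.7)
The plan is to verify the two equivalences separately, each being a direct rearrangement of an Euler-type identity already derived in the excerpt.

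For (i)$\Leftrightarrow$(ii), I would apply the face-walk identity stated just above the lemma, namely
\[
\sum_{k=1}^s (|d_k|-3) + \sum_{k\in I(W)} (|c_k|-3) \;=\; 6g_r(S_W) + f(G_W) - 6,
\]
together with the remark that $g_r(S_W)=g_r(W)$. Subtracting $6g_r(W)$ from both sides shows that inequality (ii) is literally the statement $f(G_W)-6\ge 0$, which is (i).

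For (ii)$\Leftrightarrow$(iii), the key observation is that, because $U$ and $W$ are superfaces with common boundary formed by the disjoint cycles $d_1,\dots,d_s$, each nontriangular face of $G$ lies entirely on one side, so $I(U)$ and $I(W)$ partition the index set $\{1,\dots,n\}$ of nontriangular faces. Applying the face-walk identity to $G\subset S$ with the hypothesis $f(G)=6$ therefore gives
\[
\sum_{k\in I(U)}(|c_k|-3) + \sum_{k\in I(W)}(|c_k|-3) \;=\; 6g_r(S).
\]
I can then eliminate the $I(W)$-sum from (ii) using this relation, and use the addition formula $g_r(S)=g_r(U)+g_r(W)+(s-1)$ recorded earlier to replace $6g_r(W)$ by $6g_r(S)-6g_r(U)-6(s-1)$. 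The terms involving $6g_r(S)$ cancel and rearrangement yields (iii) exactly; the reverse substitution is identical, so (ii) and (iii) are equivalent.

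The only real step requiring care is the partition claim that $I(U)\cup I(W)$ exhausts the nontriangular faces of $G$ with no overlap, which is immediate from the hypothesis that $U$ is \emph{simple} (its boundary is a disjoint union of cycles of $G$) and \emph{balanced} (so $W$ is nonempty and connected). Everything else is linear algebra on the two face-walk identities and the genus addition formula, so I do not anticipate any serious obstacle.
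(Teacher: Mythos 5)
Your argument is correct and follows the paper's own proof step for step: the face-walk identity for $G_W$ in $S_W$ gives (i)$\Leftrightarrow$(ii), and then the face-walk identity for $G$ in $S$ (with $f(G)=6$) combined with the reduced-genus addition formula $g_r(S)=g_r(U)+g_r(W)+(s-1)$ gives (ii)$\Leftrightarrow$(iii). The one point you flag explicitly, that $I(U)$ and $I(W)$ partition the nontriangular-face indices, is indeed the small observation the paper leaves implicit, and your justification of it (a face of $G$ is disjoint from the cycles $d_j\subset G$, hence lies wholly in $U$ or in $W$) is sound.
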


\begin{proof}
The equivalence of (i) and (ii) follows from the face walk identity for $G_W$ in $S_W$. By the face walk identity for $G$ observe that (ii) holds if and only if
\[
\sum_{k=1}^s (|d_k|-3) + (6g_r(S)-\sum_{k\in I(U)} (|c_k|-3)) \geq 6g_r(W),
\]
and, in view of the reduced genus addition formula, this holds if and only if (iii) holds.
\end{proof}

In the next definition we refer to general balanced superfaces $U$. Thus  $U$ has a set of closed boundary walks, denoted once again as $d_1, \dots , d_s$, and these walks need not be cycles and need not be disjoint. Also, as simple examples show, the boundary of $W$ need not be equal to the boundary of $U$.

\begin{definition}\label{d:girthinequalities}
A cellularly embedded graph $G$, in the compact surface $S$, with $f(G)=6$,  \emph{satisfies the girth inequalities}
if the inequality (iii) of Lemma \ref{l:GIforSymmetric_UKW_CYCLES_B} holds for every balanced superface $U$ of $G$ with closed boundary walks $d_1, \dots ,d_s$. 
\end{definition}


\begin{thm}\label{t:36tightinG}
Let $G$ be a cellularly embedded graph in a compact surface with $f(G)=6$. Then the following are equivalent.

(i) $G$ is $(3,6)$-tight.

(ii) $f(G_U)\geq 6$ for every superface $U$ of $G$. 

(iii) $G$ satisfies the girth inequalities.
\end{thm}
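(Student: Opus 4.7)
The plan is to establish the three-way equivalence cyclically via $(i) \Rightarrow (ii) \Rightarrow (iii) \Rightarrow (i)$. The first implication is immediate: each $G_U$ is a subgraph of $G$ containing at least three vertices (since superfaces are required to be proper), so $(3,6)$-tightness of $G$ at once yields $f(G_U) \geq 6$. For $(ii) \Rightarrow (iii)$, I would apply Lemma \ref{l:GIforSymmetric_UKW_CYCLES_B} to each balanced simple superface $U$ with complementary superface $W$: the equivalence of conditions (i) and (iii) of the Lemma turns the hypothesis $f(G_W) \geq 6$ directly into the girth inequality for $U$. For non-simple boundary walks one can reduce to the simple case by a local modification (such as performing barycentric subdivision in a small neighbourhood of each shared vertex or repeated edge) that preserves the Maxwell count, and one can then invoke the reduction to the simple case.

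The main content is $(iii) \Rightarrow (i)$. Given an arbitrary subgraph $H$ of $G$ with at least three vertices, view $H$ as an embedded subgraph of $S$, generally non-cellular, with complementary faces $U_1, \dots, U_m$ that are each superfaces of $G$ of reduced genus $g_r(U_j)$ bounded by $s_j$ closed walks $d_{j,1}, \dots, d_{j,s_j}$ of total length $L_j = \sum_k |d_{j,k}|$. Inclusion--exclusion on Euler characteristics yields
\[
\chi(S) = (|V(H)| - |E(H)|) + \sum_j \bigl(2 - 2 g_r(U_j) - s_j\bigr),
\]
and combining this with the handshake identity $2|E(H)| = \sum_j L_j$ and the relation $L_j - 3s_j = \sum_k (|d_{j,k}|-3)$ gives the key formula
\[
f(H) = 6 - 6 g_r(S) + \sum_{j,k}(|d_{j,k}|-3) + 6 \sum_j \bigl(g_r(U_j) + s_j - 1\bigr).
\]
Summing the girth inequality from (iii) across all faces $U_j$, and using the face-walk identity $\sum_c (|c|-3) = 6 g_r(S)$ for $G$ in $S$ (which holds since $f(G)=6$ and each face of $G$ lies in exactly one $U_j$), I obtain
\[
\sum_{j,k}(|d_{j,k}|-3) \geq 6 g_r(S) - 6 \sum_j \bigl(g_r(U_j) + s_j - 1\bigr).
\]
Substitution into the previous formula yields $f(H) \geq 6$, as required.

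The principal obstacle is the correct handling of faces $U_j$ of $H$ that are non-balanced, i.e.\ for which the complement $S \setminus \overline{U_j}$ decomposes into several connected components $W_{j,1}, \dots, W_{j,r_j}$. In that case the Lemma does not apply to $U_j$ directly, and I would instead combine girth inequalities for balanced superfaces of the form $\overline{W_{j,i}}^c$ with an iterated version of the reduced genus addition formula $g_r(S) = g_r(\overline{U}) + \sum_i g_r(\overline{W_i}) + (s - r)$. Minor issues, such as isolated vertices in $H$ contributing $+3$ each to $f(H)$, or disconnectedness of $H$, are handled by straightforward componentwise arguments. Once (iii) implies (i), the alternative nonconstructive proof of Theorem \ref{t:S_hastriag} follows by exhibiting, for any noncompact surface, a triangulation whose finite compact bordered subtriangulations satisfy the girth inequalities of this characterisation.
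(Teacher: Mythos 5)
Your proof and the paper's differ in one essential respect: the paper establishes the cycle of implications via a direct proof of $(ii)\Rightarrow(i)$ (an argument attributed to Q.~Shakir), whereas you close the cycle via a direct proof of $(iii)\Rightarrow(i)$ using an Euler-characteristic computation. Shakir's argument is a short maximality argument using nothing but the inclusion--exclusion property of the Maxwell count: one takes a maximal subgraph $K$ with $f(K)\leq 5$, observes that for any face $U$ of $K$ the identity $f(K\cup G_U)=f(K)+f(G_U)-f(\partial U)\leq f(K)$ holds (using $f(G_U^c)\geq 6$ from (ii) to get $f(G_U)-f(\partial U)\leq 0$), forces $K=K\cup G_U$ by maximality, and hence $K=G$, a contradiction. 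This is considerably more economical than your $(iii)\Rightarrow(i)$ route, since it requires no discussion of simple versus non-simple, or balanced versus non-balanced, superfaces: those subtleties are confined to the $(ii)\Leftrightarrow(iii)$ step where they genuinely belong. Your route instead proves $(iii)\Rightarrow(i)$ by the identity
\[
f(H) = 6 - 6 g_r(S) + \sum_{j,k}(|d_{j,k}|-3) + 6 \sum_j \bigl(g_r(U_j) + s_j - 1\bigr),
\]
which I have verified is correct when all the faces $U_j$ of $H$ are simple superfaces, and summing the girth inequality over all faces $U_j$. This makes the connection between girth inequalities and Maxwell counts very explicit, which is appealing, but it imports into the $(iii)\Rightarrow(i)$ step the full burden of reducing general superfaces to balanced simple ones. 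You acknowledge the non-balanced case as the ``principal obstacle'' and sketch, but do not carry out, a reduction combining inequalities for the balanced superfaces $S\setminus\overline{W_{j,i}}$ with iterated reduced-genus addition. That reduction does in fact work --- for instance, when $S\setminus\overline{U_j}$ has two components one finds by summing the two balanced girth inequalities that the girth inequality for $U_j$ holds with $6$ to spare --- but it is a genuine piece of work that is missing from your argument, and it is exactly the piece that Shakir's argument avoids. The paper, for its part, also defers the simple-to-general reduction to \cite{pow-girth} and to vertex-splitting (rather than your barycentric subdivision), so both proofs rely on that same technical step at the $(ii)\Leftrightarrow(iii)$ level; what the paper buys by using Shakir's argument is that the Maxwell-count condition $(i)\Leftrightarrow(ii)$ is handled elementarily and in full, with all topological subtlety isolated in a single remaining equivalence.
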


\begin{proof}
That (i) implies (ii) is immediate and the converse is due to Qays Shakir \cite{sha}. We give a proof using superfaces \cite{pow-girth}.
Suppose then that (ii) holds and (i) is not true. Then there is a maximal subgraph $K$ of $G$, with at least 3 vertices and  $f(K)\leq 5$. 
Let $U$ be a face of $K$ and suppose first that the boundary walks of $U$ consist of disjoint cycles. Then the complement of the closure of $U$ is the union of components, say $W_1, \dots ,W_r$, that are superfaces of $G$ and have disjoint closures. 
Let $G_U^c$ be the subgraph of $G$ induced by the edges of $G$ that lie in the complement of $U$. In view of our assumption $G_U^c$ is the union of the  disjoint subgraphs,
$G_{W_1}, \dots , G_{W_r}$ and so by (ii) $f(G_U^c)\geq 6r$. 
Also,  $f(G)=f(G_U) + f(G_U^c) - f(G_U \cap G_U^c)$ and so 
$f(G_U) - f(G_U \cap G_U^c)\leq 0$.
Our assumptions imply $\partial U = K \cap G_U=G_U \cap G_U^c$. We have 
$f(K\cup G_U)=f(K) + (f(G_U) - f(K \cap G_U)),$
and so it follows that $f(K\cup G_U)\leq 5$. Thus $K=K\cup G_U$ by the maximality of $K$. Since this is true for all faces $U$ of $K$ it follows that $K=G$, a contradiction, as required. 

In the general case perform vertex splitting moves on all the vertices of the boundary walks of $U$, adding the new vertices to the interior of $U$. The result of these moves, denoted $G \to G^+$, define an embedded graph $G^+\subset S$ with superface $U^-\subset U$, whose boundary walks are disjoint cycles. The boundary walks of $G_U$ belong to $K$ and the associated move $K \to K^+$ provides a subgraph $K_+$ with $K \subset K^+\subset G^+$ and $f(K^+)=f(K)\leq 5$. As in the previous paragraph, since $K^+\cap G_{U_-}= G_{U_-}^c \cap G_{U_-}$ and $f(G^+)=6$ it follows that
$f(K^+\cup G^+_{U_-})\leq5$. Since $K\cup G_U \to K^+\cup G^+_{U_-}$ it follows that
$f(K\cup G_U) = f(K^+\cup G^+_{U_-})\leq 5$ and so by the maximality of $K$ we have $K=K\cup G_U$, leading to a contradiction.

The equivalence of (ii) and (iii) follows in two routine steps. (See also \cite{pow-girth}.) Firstly, Lemma \ref{l:GIforSymmetric_UKW_CYCLES_B} can be generalised to balanced superfaces $U$ that are not necessarily simple. This follows from the association of a balanced superface of $G$ with a balanced simple superface of a larger embedded graph in $S$ obtained by vertex splitting moves. Secondly (ii) is equivalent to having $f(G_U)\geq 6$ for every balanced superface.
\end{proof}

We next consider a local barycentric subdivision move $H \to H'$ for a triangulation $H$ of a compact bordered surface $S$. Such a move, defined on an edge $e$, is illustrated in Figure \ref{f:barycentric}.
Note that $f(H')=f(H)$, since $e$ has been replaced by 10 edges and 3 vertices.

\begin{center}
\begin{figure}[ht]
\centering
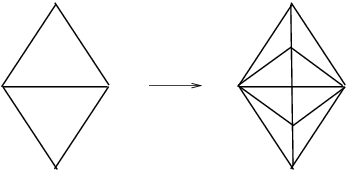 
\caption{A local barycentric move for the edge $e$.} 
\label{f:barycentric}
\end{figure}
\end{center}

\begin{lemma}\label{l:barycentricLemma}
Let $S$ be a compact bordered surface, with boundary  curves $c_1, \dots , c_r$, and let $G$ be a triangulation of $S$ with $f(G)=6$. Then for some $n$ the result $G'$ of $n$-fold barycentric subdivision is $(3,6)$-tight.
\end{lemma}

\begin{proof}
Let $\tilde{S}$ be the compact surface obtained by capping each of the border curves by a closed disc. We may then view $G$ as a cellularly embedded graph in $\tilde{S}$ with $f(G)=6$ and where the boundary cycle of a nontriangular face is $c_i$ for some $i$. Discarding the cycles $c_i$ of length 3 and relabelling, we may assume that $G\subset \tilde{S}$ has $r$ nontriangular faces $U_1, \dots , U_r$ with boundary cycles $c_1, \dots ,c_r$.


By Theorem \ref{t:36tightinG} it suffices to show that if $n$ is  sufficiently large then for every balanced superface $U$ of $G'$ we have $\delta(U)\geq 0$ where $\delta(U)$ is the difference 
\[\delta(U)= \sum_{k=1}^s (|d_k|-3)- (6g_r(U)- \sum_{k\in I(U)} (|c_k|-3)).
\]
Let $\B(G)$ be the set of such superfaces for which this inequality fails to hold and let $\B'(G)$ be the subset for which the left hand side has the minimum value, $\mu$ say. If $\mu\geq 0$ then $G$  already satisfies the girth inequalities and there is nothing to prove. Suppose then that $\mu<0$  and $U$ belong $\B'(G)$. We show that there exists a local barycentric move, $G \to G_1$ say, such that
$|\B'(G_1)|<\B'(G)|$.  Repeating such moves sufficiently often leads to $G'$ with $\delta(U)\geq 0$ for all superfaces, as desired.

Since $U$ is a superface of $G$, with proper closure, there exists an edge $e$ in one of the boundary walks, $d_i$ say,  that is not a boundary edge of $G$ and so is incident to 2 triangular faces. Let $G \to G'$ be the local barycentric subdivision move for $e$. Let $U_1$ be a superface of $G_1$ with a boundary walk $d$ that has a subwalk $\pi$ with edges amongst the 10 replacement edges for $e$. Then there is an associated superface $W$ of $G$ with such subwalks replaced by the edge $uv$ where $u, v$ are the initial and final vertices of $\pi$. Since each $\pi$ has length at least 2 it follows that $\delta(U_1)>\delta(W)\geq \mu$, and so $|\B'(G_1)|<|\B'(G)|$.
\end{proof}

\begin{lemma}\label{l:extension}
Let $S_1, T$ be compact bordered surfaces and let $S_2=S_1\cup_{d=a} T$ be their join over  boundary curves $d, a$ in $S, T$ respectively. If $G_1\subset S_1$ is a $(3,6)$-tight triangulation then there is a triangulation $H$ of $T$ such that the boundary cycle $d'$ for $d$ has the same length as the cycle $a'$ for $a$ and the join $G_1\cup_{d'=a'}H$ is $(3,6)$-tight.
\end{lemma}

\begin{proof}
An initial triangulation $H_0\subset T$ is chosen so that $f(G_1\cup_{d'=a'}H_0)=6$. A similar argument to the previous proof shows that after sufficiently many local barycentric subdivision moves $H_0\to H_1 \to \dots\to  H_n=H$,  the join $G_1\cup_{d'=a'}H$ satisfies the girth inequalities. 
\end{proof}

An alternative proof of Theorem \ref{t:S_hastriag} now follows from the previous lemma and the existence of a canonical exhaustion of a noncompact surface $S$ by compact bordered surfaces, as given in Theorem \ref{t:modelsufficiency}.

\begin{rem}
Lemma \ref{l:GIforSymmetric_UKW_CYCLES_B}(ii) can be used to show that a periodic surface in $\bR^3$ with infinite genus fails to have a periodic $(3,6)$-tight triangulation. For example, the bordered surface in Figure \ref{f:periodicsurface} shows a geometric realisation, $S_0$ say, of $\bS_0\backslash 6\bD$, with the symmetry of a cube. We assume that the distance between opposite boundary circles is unity. The 3-periodic surface, $S$ say, formed by an infinite concatenation of translates of $S_0$ has infinite genus and a singleton ideal boundary. The minimal surface realisation of $S$ is known as the Schwarz P surface. 
\begin{center}
\begin{figure}[ht]
\centering
\includegraphics[width=3.5cm]{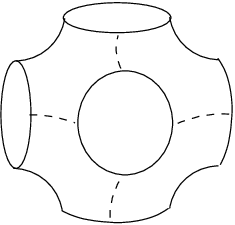}
\caption{Building block for the Schwarz P surface.} 
\label{f:periodicsurface}
\end{figure}
\end{center}

Let $S_n$ be the compact bordered subsurface formed by the join of  $n^3$ copies of $S_0$, in cubic form, and suppose that $S_n$ lies in the interior of a similar subsurface $S_m$ with $m >n$. Suppose, for simplicity, that $G$ is a periodic triangulation of $S$, under integral translations, determined by a triangulation of $S_0$. Let $W$ be the balanced simple superface of $G_m\subset S_m$ that is equal to the interior of $S_m\backslash S_n$. 
It has a set of boundary cycles, $d_1, \dots d_t$ say, consisting of the union of the boundary cycles for $S_n$ and $S_m$. From Lemma \ref{l:GIforSymmetric_UKW_CYCLES_B}(ii) we see that the condition $f(G_W)\geq 6$ is equivalent to
\[
\sum_{k=1}^t (|d_k|-3) \geq 6g_r(W).
\]
By the periodicity, for fixed $n$ the left hand side is of order $m^2$. However, the right hand side is of order $m^3$ and so the inequality $f(G_W)\geq 6$ cannot hold for large $m$ and so $G$ fails to be $(3,6)$-tight.
\end{rem}

\section{Bar-joint frameworks}\label{s:barjointframeworks}
A bar-joint framework $(G,p)$ in $\bR^3$ consists of a finite or countable simple graph $G = (V, E)$ and a placement $p : V \to \bR^3$ such that $p(v) , p(w)$ are distinct for each edge $vw$ in $E$. An \emph{infinitesimal flex} of $(G, p)$ is a velocity assignment $u : V \to \bR^3$ that satisfies the infinitesimal flex condition  $(u(v) - u(w)) \cdot (p(v) - p(w)) = 0$ for every edge $vw$. A \emph{trivial infinitesimal flex} of $(G, p)$ is one that  extends to an infinitesimal flex of any containing framework, which is to say that it is a linear combination of a translation infinitesimal flex and a rotation infinitesimal flex. The framework $(G, p)$ is \emph{infinitesimally rigid} if
the only infinitesimal flexes are trivial, and the graph $G$ is \emph{3-rigid}  if every generic framework $(G, p)$, where the coordinates of the vertex placements form an algebraically independent set, 
is infinitesimally rigid. Also $G$ is \emph{minimally 3-rigid} if on  deleting any edge of $G$ the resulting graph fails to be 3-rigid.
A finite graph $G$ that is minimally 3-rigid is necessarily $(3,6)$-tight, but the converse does not hold as evidenced by the double banana graph. See also Figure \ref{f:DBtorus} below.

It is well-known that the Henneberg 0-extension move and vertex-splitting preserve minimal 3-rigidity as well as $(3,6)$-tightness. See, for example, Whiteley \cite{whi}, Graver et al \cite{gra-ser-ser}, and the appendix of \cite{cru-kit-pow-1}.
A sufficient condition for minimal 3-rigidity is that there exists a sequence of finite subgraphs $G_1\subset G_2 \subset \dots ,$  with union $G$, each of which is minimally 3-rigid. That this sequential 3-rigidity condition is not necessary is shown by examples in Kitson and Power \cite{kit-pow-I}, \cite {kit-pow-II}.

Let us first note some countable surface graphs corresponding to triangulations of low genus noncompact surfaces where the ideal boundary is finite. 

\begin{example}\label{e:torus2holesflex}
Figure \ref{f:DBtorus} (see also \cite{cru-kit-pow-2}) shows an embedded graph in the torus with $12$ vertices that is $(3,6)$-tight and 2-connected. The 2 nontriangular faces each have a single boundary walk of length 6 that is not a cycle, and $G$ can be viewed also as a triangulation of a pseudosurface. Since $G$ is 2-connected it is not 3-rigid.  
\begin{center}
\begin{figure}[ht]
\centering
\includegraphics[width=3cm]{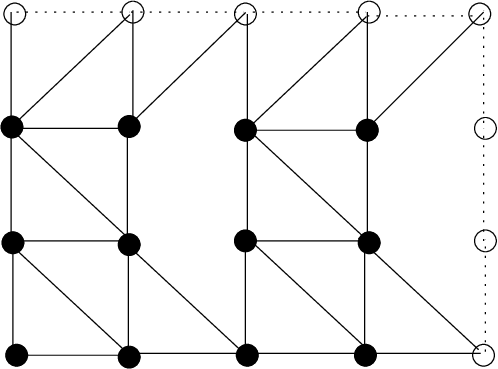}
\caption{A finite $(3,6)$-tight embedded graph $G\subset \bS_1$ that is not 3-rigid.} 
\label{f:DBtorus}
\end{figure}
\end{center}
We may extend this example to an infinite triangulation of the the noncompact surface $S=\bS_1\backslash \{x_1, x_2\}$ by indefinitely  repeated 0-extension moves on the vertices of the 2 boundary walks, in the manner of Figure \ref{f:hennenbergconstn}(ii). This results in a $(3,6)$-tight triangulation of $S$ that is not 3-rigid. 
\end{example}

\begin{example}\label{e:puncturedP}
In Kastis and Power \cite{kas-pow} it is shown that every $(3,6)$-tight embedded graph $G\subset \bP$ is minimally 3-rigid and moreover is obtained from $K_3$ by general vertex splitting moves. (See also \cite{pow-girth}.) It follows from this that any $(3,6)$-tight triangulation of a finitely punctured projective plane is minimally 3-rigid.
\end{example}

The following rigid subgraph substitution lemma (a companion to Lemma \ref{l:substitution}) is well-known and follows from the definitions of infinitesimal flexibility and 3-rigidity.

\begin{lemma}\label{l:3rigidsubstitution}
Let $A, A'$ be compact bordered surfaces, let $G\subset A, G'\subset A'$ be minimally 3-rigid triangulations, and let $d, d'$ be cycles of edges of $G, G'$ corresponding to one of the boundary cycles of $A, A'$. Suppose that $B$ is a compact bordered surface with a triangulation $H$ with a boundary cycle $a$ and that $|a|=|d|=|d'|$. Then $G\cup_{d=a} H$ is minimally 3-rigid  if and only if $G'\cup_{d'=a} H$ is minimally 3-rigid.
\end{lemma}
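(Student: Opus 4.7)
The plan is to reduce the claim to a gluing principle for infinitesimal rigidity at generic placements, showing that rigidity of the join depends only on $H$ and the placement of the boundary cycle $a$, not on the internal structure of the minimally 3-rigid triangulation attached to it.

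First I would invoke the earlier substitution lemma, Lemma \ref{l:substitution}, to record that $G \cup_{d=a} H$ is $(3,6)$-tight if and only if $G' \cup_{d'=a} H$ is. In particular both joins share the same Maxwell count, so that once 3-rigidity of one join is transferred to the other it is automatically minimal: a 3-rigid graph on $n$ vertices has at least $3n-6$ edges, and $(3,6)$-tightness forces equality, hence every edge is needed.

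Next I would run the standard generic gluing argument for infinitesimal rigidity. Let $p$ be a generic placement of $G \cup_{d=a} H$ in $\bR^3$ and let $u$ be an infinitesimal flex. The restriction $p|_{V(G)}$ is generic (a subset of an algebraically independent set is algebraically independent), and since $G$ is minimally 3-rigid the restriction $u|_{V(G)}$ is trivial, hence extends to an infinitesimal rigid motion $\tilde u$ of $\bR^3$. The difference $u - \tilde u$ is a flex of $G \cup H$ vanishing on every vertex of $G$, in particular on the $|a| \geq 3$ generically (hence non-collinearly) placed vertices of the identified cycle $a=d$. Its restriction to $H$ is thus an infinitesimal flex of $(H, p|_{V(H)})$ vanishing on $V(a)$. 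Conversely, any such cycle-pinning flex of $H$ extends by zero on $V(G) \setminus V(a)$ to an infinitesimal flex of the join, and this extension is trivial precisely when it is zero. Consequently $(G \cup_{d=a} H, p)$ is infinitesimally rigid if and only if $(H, p|_{V(H)})$ admits no nonzero infinitesimal flex vanishing on $V(a)$.

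The condition just obtained depends only on $H$ and the generic placement of the $|a|$ cycle vertices, and is independent of which minimally 3-rigid triangulation, $G$ or $G'$, is joined to $H$. The same condition therefore governs the infinitesimal rigidity of $G' \cup_{d'=a} H$ at a generic placement, and combining this with the agreement of Maxwell counts from step one, minimal 3-rigidity transfers between the two joins. The only subtleties to verify are the preservation of genericity under restriction to a subset of vertices and the validity of the zero-extension as a genuine infinitesimal flex of the join, both of which are routine.
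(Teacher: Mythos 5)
The paper offers no proof of this lemma; it merely remarks that the result ``is well-known and follows readily from the definitions of infinitesimal flexibility and 3-rigidity.'' Your generic gluing argument is precisely the standard justification the author has in mind, and it is correct: restricting a generic flex of the join to $V(G)$ and subtracting the induced rigid motion reduces the rigidity question to whether $(H,p|_{V(H)})$ has a nonzero flex pinned on $V(a)$, a condition manifestly independent of which minimally 3-rigid graph is attached along the cycle, and the passage from transferred 3-rigidity to transferred \emph{minimal} 3-rigidity via the Maxwell count is the right device.

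One minor economy is available. You do not need the full force of Lemma~\ref{l:substitution} (which establishes $(3,6)$-sparsity of every subgraph); for the minimality step it is enough that $f(G\cup_{d=a}H)=f(G'\cup_{d'=a}H)$, and this follows from the additivity $f(G\cup H)=f(G)+f(H)-f(\partial)$ over the identified cycle together with $f(G)=f(G')=6$, which holds because a minimally 3-rigid graph is $(3,6)$-tight. Once both joins are known to be 3-rigid with equal Maxwell counts, each is minimally 3-rigid if and only if that common count equals $6$, since a 3-rigid graph on $n\geq 3$ vertices with exactly $3n-6$ edges is automatically minimal. Your appeal to Lemma~\ref{l:substitution} is of course also correct, only slightly stronger than needed.
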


\begin{thm}\label{t:min3rigid}
Let $S$ be a noncompact surface. Then there is a $(3,6)$-tight triangulation of $S$ whose graph is minimally 3-rigid.
\end{thm}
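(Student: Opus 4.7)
The plan is to mirror the constructive proof of Theorem~\ref{t:S_hastriag}, but at every stage to arrange that the finite triangulation produced is not merely (3,6)-tight but in fact minimally 3-rigid. By Theorem~\ref{t:modelsufficiency} I may assume $S=S_\gamma$, and I build $G=\bigcup_{k\geq 0}G_\gamma^k$ as an inclusion chain of triangulations of the compact bordered subsurfaces $S_\gamma^k$. At level $k=0$, when $\gamma(v_\phi)=\bS_0$ I take $G_\gamma^0$ to be a triangular boundary cycle, which is trivially minimally 3-rigid. For $\gamma(v_\phi)=\bP$ or $\bS_1$, I use the specific base triangulations depicted in Figure~\ref{f:small36tight}; these are minimally 3-rigid by Example~\ref{e:puncturedP} (via \cite{kas-pow}) and by the singly-punctured torus remark of Example~\ref{e:torus2holesflex} (via \cite{cru-kit-pow-2}) respectively.

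For the inductive step, the branching attachment of Lemma~\ref{l:pants2legs} and the $\bS_0\backslash 2\bD$ case of Lemma~\ref{l:pants1leg} attach a triangulation $H$ of the component bordered surface directly to a boundary cycle $d$ of $G_\gamma^k$ by a sequence of Henneberg 0-extension moves performed on $G_\gamma^k$ itself. Since 0-extensions preserve minimal 3-rigidity, the enlarged triangulation remains minimally 3-rigid. For the cases $B=\bP\backslash 2\bD$ and $B=\bS_1\backslash 2\bD$ of Lemma~\ref{l:pants1leg}, the argument instead produces, from the small base triangulation in Figure~\ref{f:small36tight}, a block-and-hole graph of the form $\D_\delta\cup_d H$ by a sequence of 0-extension and vertex-splitting moves; both move types preserve minimal 3-rigidity, so $\D_\delta\cup_d H$ is minimally 3-rigid. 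Since the discus graph $\D_\delta$ is evidently a minimally 3-rigid triangulation of a closed disc with boundary cycle of length $\delta$, Lemma~\ref{l:3rigidsubstitution} applied with $G'=\D_\delta$ and $G=G_\gamma^k$ (minimally 3-rigid by induction) transfers rigidity across the substitution and gives that $G_\gamma^k\cup_d H$ is minimally 3-rigid.

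Iterating these attachments over the finitely many boundary cycles at each level produces a minimally 3-rigid triangulation $G_\gamma^{k+1}$, and the tower $G_\gamma^0\subset G_\gamma^1\subset\cdots$ consists of vertex-induced subgraphs of $G$ with union $G$; indeed the attachment construction identifies new pieces only along existing boundary cycles and introduces no new edges between old vertices. The sufficient condition for sequential minimal 3-rigidity noted at the start of this section then yields that $G$ itself is minimally 3-rigid. The main obstacle in this plan is securing the base-case minimal 3-rigidity of the two triangulations in Figure~\ref{f:small36tight}, which is the sole genuinely nontrivial input; it is supplied by the cited results on low-genus single-puncture surfaces, and all remaining steps reduce to a bookkeeping combination of move preservation with Lemma~\ref{l:3rigidsubstitution}.
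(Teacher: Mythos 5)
Your proof is correct and follows the paper's argument essentially verbatim: pass to the model surface $S_\gamma$, build the triangulation as an increasing chain of triangulated compact bordered subsurfaces by attaching the four low-genus pieces, propagate minimal 3-rigidity through 0-extension and vertex-splitting moves, and use Lemma~\ref{l:3rigidsubstitution} to transfer rigidity of $\D_\delta\cup_d H$ to $G_\gamma^k\cup_d H$ in the $\bP\backslash 2\bD$ and $\bS_1\backslash 2\bD$ cases. The one place you diverge is the base case, which you single out as the ``sole genuinely nontrivial input'' and settle by appealing to the results of \cite{kas-pow} and \cite{cru-kit-pow-2}; the paper instead remarks just before Lemma~\ref{l:pants1leg} that the two triangulations of Figure~\ref{f:small36tight} are themselves generated from $K_3$ by 0-extension moves, so their minimal 3-rigidity is elementary and the deeper rigidity theorems for the punctured projective plane and torus are not actually needed.
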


\begin{proof}
In the proof of Theorem \ref{t:S_hastriag} the $(3,6)$-tight triangulation $G$ of a model surface $S$ is constructed as a union of graphs $G_1\subset G_2\subset \dots ,$
where each $G_k$ is a $(3,6)$-tight triangulation of a compact bordered subsurface $S_k$. The first embedded graph  $G_0$ is a triangulation of $S_0\backslash \bD$ 
with a single triangular boundary and this is minimally 3-rigid as well as $(3,6)$-tight. 
Also, the construction, for any $k\geq 0$, of $G_{k+1}$ from $G_k$ is by means of $0$-extension moves, vertex splitting moves and the substitution of a discus graph by $G_k$. A discus graph is a triangulated sphere and is minimally 3-rigid and so by Lemma \ref{l:3rigidsubstitution} all the construction moves preserve minimal 3-rigidity. Thus  the triangulation of $S_\gamma$ given by the union of the graphs $G_k$ is minimally 3-rigid.
\end{proof}

\subsection{Further directions}\label{ss:openprobs} (a) In contrast to Example \ref{e:torus2holesflex} the $(3,6)$-tight triangulations of the singly punctured torus are minimally 3-rigid. This follows from the main result in \cite{cru-kit-pow-2} that the graph of a $(3,6)$-tight triangulation of $\bS_1\backslash \bD$ is minimally 3-rigid.
This suggests the following interesting problem.
For which compact surfaces $S$ are the $(3,6)$-tight triangulations of $S\backslash \bD$ minimally 3-rigid?

(b)  The rigidity of bar-joint frameworks in $\bR^3$ with respect to nonEuclidean norms, such as the $\ell^p$ norms $\|\cdot\|_p$ for $1< p<\infty, p\neq 2$, is a topic of current interest. See, for example, \cite{kit-pow}, \cite{kit-nix-sch}, \cite{dew-kit-nix}. Because of the absence of infinitesimal rotations it is the $(3,3)$-tightness of the underlying graph $G$ that is a necessary condition for the minimal 3-rigidity of a finite framework with respect to these norms. It is conjectured that it is also a necessary condition. The construction methods of the previous sections for triangulations of noncompact surfaces $S$ are expected to adapt readily to the the existence of $(3,3)$-tight triangulation when $S$ has nonzero genus. Thus the following general problem arises. For which compact surfaces $S$ and integers $r\geq 1$ are the $(3,3)$-tight triangulations of $S\backslash \bD$ minimally 3-rigid for the $\ell^p$ norms?





\bibliographystyle{abbrv}
\def\lfhook#1{\setbox0=\hbox{#1}{\ooalign{\hidewidth
  \lower1.5ex\hbox{'}\hidewidth\crcr\unhbox0}}}

\end{document}